\documentclass[12pt,reqno]{amsart}
\usepackage[margin=1.5in]{geometry}
\usepackage{graphicx, amsmath, amssymb, amscd, amsthm, euscript, amsfonts,color, bm, esint, mathtools}
\usepackage[utf8]{inputenc}
\usepackage[english]{babel}
\usepackage{verbatim}
\usepackage[colorlinks=true,urlcolor=blue,linkcolor=blue,citecolor=blue,psdextra]{hyperref}

\usepackage{bookmark}
\usepackage{psfrag}

\newtheorem{theorem}{Theorem}[section]
\newtheorem{lemma}[theorem]{Lemma}
\newtheorem{proposition}[theorem]{Proposition}
\newtheorem{corollary}[theorem]{Corollary}

\theoremstyle{remark}
\newtheorem{remark}[theorem]{Remark}

\theoremstyle{definition}
\newtheorem{definition}[theorem]{Definition}

\numberwithin{equation}{section}

\DeclareMathOperator{\Ad}{{\rm Ad}}          

\DeclareMathOperator{\Lie}{{\rm Lie}}        
\DeclareMathOperator{\sign}{sign}

\newcommand{\field}[1]{\mathbb{#1}}
\newcommand{\R}{\field{R}}
\newcommand{\N}{\field{N}}
\newcommand{\Q}{\field{Q}}
\newcommand{\Z}{\field{Z}}

\newcommand{\DTA}[1]{\mathrm{DT}(#1)}
\newcommand{\DTB}[1]{\mathrm{DT}^\prime(#1)}

\newcommand{\mbz}{\boldsymbol{z}}

\def\cH{\mathcal{H}}
\def\cL{\mathcal{L}}
\def\cN{\mathcal{N}}
\def\cE{\mathcal{E}}
\def\cM{\mathcal{M}}
\def\cT{\mathcal{T}}

\def\te{{\tilde e}}

\newcommand{\lief}{\mathfrak{f}}

\providecommand{\abs}[1]{\lvert#1\rvert}
\providecommand{\Abs}[1]{\Bigl\lvert #1 \Bigr\rvert}
\providecommand{\norm}[1]{\lVert#1\rVert}

\DeclareMathOperator{\SL}{SL}

\DeclareMathOperator{\M}{M}
\DeclareMathOperator{\SO}{SO}
\newcommand{\SOd}{\SO(\cT)}
\DeclareMathOperator{\diag}{diag}
\newcommand{\inv}{^{-1}}
\newcommand{\cl}[1]{\overline{#1}}

\DeclareMathOperator{\vol}{vol}
\DeclareMathOperator{\Span}{span}

\usepackage[utf8]{inputenc}

\numberwithin{equation}{section}

\title[Expanding translates of shrinking submanifolds]{Limit distributions of expanding translates of shrinking submanifolds and non-improvability of Dirichlet's approximation theorem}
\author{Nimish A. Shah}

\address{The Ohio State University, Columbus, OH 43210; email: shah@math.osu.edu}
\author{Pengyu Yang}
\address{Morningside Center of Mathematics, Chinese Academy of Sciences, Beijing 100190; email:yangpengyu@amss.ac.cn}
\thanks{N.A. Shah was partially supported by NSF grant DMS-1700394.}
\thanks{P. Yang is supported by National Key R\&D Program of China 2022YFA1007500 and NSFC grant 22AAA00245.}
\subjclass[2010]{Primary 37A17, 22E46; Secondary 11J13}
\keywords{Homogeneous dynamics, unipotent flow, Dirichlet-improvable vectors, equidistribution}

\begin{document}

\begin{abstract}
    On the space $\cL_{n+1}$ of unimodular lattices in $\R^{n+1}$, we consider the standard action of $a(t)=\diag(t^n,t^{-1},\ldots,t^{-1})\in \SL(n+1,\R)$ for $t>1$. Let 
    $M$ be a nondegenerate submanifold of an expanding horospherical leaf in $\cL_{n+1}$. 
    We prove that for all $x\in M\setminus E$ and $t>1$, if $\mu_{x,t}$ denotes the normalized Lebesgue measure on the ball of radius $t^{-1}$ around $x$ in $M$, then the translated measure $a(t)\mu_{x,t}$ gets equidistributed in $\cL_{n+1}$ as $t\to\infty$, where $E$ is a union of countably many lower dimensional submanifolds of $M$. In particular, if $\mu$ is an absolutely continuous probability measure on $M$, then $a(t)\mu$ gets equidistributed in $\cL_{n+1}$ as $t\to\infty$. This result implies the non-improvability of Dirichlet's Diophantine approximation theorem for almost every point on a $C^{n+1}$-submanifold of $\R^n$ satisfying a non-degeneracy condition, answering a question arising from the work of Davenport and Schmidt (1969).
\end{abstract}

\maketitle

\section{Introduction}

\label{sec:intro}

After Davenport and Schmidt~\cite{Davenport+Schmidt:Dirichlet}, given $0<\lambda\leq 1$, we say that $\mbz=(z_1,\ldots,z_n)\in\R^n$ is $\DTA{\lambda}$ if for each sufficiently large $N\in\N$, there exist integers $q_1,\ldots,q_n$ and $p$ such that
\begin{equation} \label{eq:DTA}
    \abs{(q_1z_1+\ldots+q_nz_n)-p}\leq \lambda/N^{n} \text{ and } 0<\max_{1\leq i\leq n} \abs{q_i}\leq \lambda N.
\end{equation}
In a dual manner, we say that $\mbz\in\R^n$ is $\DTB{\lambda}$ if for each sufficiently large $N\in\N$ there exist integers $q$ and $p_1,\ldots,p_n$ such that
\begin{equation} \label{eq:DTB}
 \max_{1\leq i\leq n}  \abs{qz_i-p_i}\leq \lambda/N \text{ and } 0<\abs{q}\leq \lambda N^n.
\end{equation}

Dirichlet's simultaneous approximation theorem states that every $\mbz\in\R^n$ is $\DTA{1}$ and $\DTB{1}$.
Davenport and Schmidt~\cite{Davenport+Schmidt:Dirichlet} proved that for any $\lambda<1$, almost every $\mbz\in\R^n$ is not $\DTA{\lambda}$ and not $\DTB{\lambda}$.
In other words, Dirichlet's theorem cannot be improved for almost all $\mbz\in\R^n$.
In~\cite{DS:curve}, they showed that for almost every $z\in \R$, the vector $\mbz=(z,z^2)\in\R^2$ is not $\DTA{1/4}$, opening an investigation of whether almost all points on a sufficiently curved submanifold in $\R^n$ are not $\DTA{\lambda}$ for any $\lambda<1$. The question was taken up in \cite{Baker:curves,Dodson:manifolds,Bugeaud:poly}, where several non-improvability results were obtained for small $\lambda>0$.
Later Kleinbock and Weiss~\cite{Kleinbock+Weiss:Dirichlet} 
reformulated this question in terms of dynamics on homogeneous spaces using an observation due to Dani~\cite{Dani:divergent} relating simultaneous Diophantine approximation to asymptotic properties of individual orbits of diagonal subgroups. Using the non-divergence techniques from \cite{Klein+Mar:Annals98}, they~\cite{Kleinbock+Weiss:Dirichlet} proved that for any `$l$-nondegenerate' differentiable map $\psi$ from an open set $\Omega\subset\R^d$ to $\R^n$, there exists $\lambda>0$ such that $\psi(s)$ is not $\DTA{\lambda}$ for Lebesgue almost every $s\in\Omega$.

In~\cite{Shah:Dirichlet} by proving an equidistribution result for expanding translates of analytic curve segments on the space of unimodular lattices in $\R^{n+1}$, it was shown that if $\psi:(0,1)\to\R^n$ is analytic and its image is not contained in a proper affine subspace of $\R^n$, then $\psi(s)$ is not $\DTA{\lambda}$ and not $\DTB{\lambda}$ for almost all $s\in (0,1)$ and all $\lambda<1$.

For the smooth curve case, Shi and Weiss~\cite{Shi-Weiss:2017} showed that almost any point on a $2$-nondegenerate $C^2$-curve in $\R^2$ is not $\DTA{\lambda}$ for any $\lambda<1$, by proving equidistribution of {\em averages\/} of $a(t)$-translates of the associated curve in $\SL(3,\R)/\SL(3,\Z)$.

The analyticity of $\psi$ in \cite{Shah:Dirichlet} is a technical assumption because of a fundamental limitation of the linearization technique used in the proof, as the $(C,\alpha)$-good property \cite{Klein+Mar:Annals98} of differentiable maps may not survive under composition by non-linear polynomial maps. To overcome this difficulty, as in \cite{Shah:SOn1-smooth} for $G=\SO^0(n,1)$, we would like to prove an equidistribution result for expanding translates of shrinking curves. In this article, we make an algebraic observation that allows us to express expanding translates of optimally shrinking curves as long polynomial trajectories. Then, we apply an earlier result of Shah~\cite{Shah:polynom} about equidistribution of long polynomial trajectories. Our final equidistribution result leads to the non-improvability of Dirichlet's approximation theorem for nonplanar manifolds as defined by Pyartli~\cite[\S2]{Pyartli69}. 

\begin{definition}[Nonplanar submanifold] \label{def:Pyartli} 

A $k$-times differentiable map $\zeta:I\to \mathcal{A}$, where $I$ is an open subset of $\R^1$ and $\mathcal{A}$ is a $k$-dimensional Euclidean affine space, is called {\em nonplanar\/} if for each $r\in I$ the derivative vectors $\zeta^{(1)}(r),\ldots,\zeta^{(k)}(r)$ are linearly independent. 

Let $\psi$ be a $n$-times differentiable map from an open set $\Omega\subset\R^d$ to $\R^n$. We say that the submanifold $(\Omega,\psi,\R^n)$ is {\em nonplanar\/} at $s\in \Omega$ if the tangent space $\cT=D\psi(s)(\R^d)$ at $\psi(s)$ has dimension $d$, and there exists a $(n-d+1)$-dimensional subspace $\cM$ in $\R^n$ such that the following holds: $\cT+\cM=\R^n$, $\dim(\cT\cap\cM)=1$, and a curve naturally parameterizing the one-dimensional submanifold given by the intersection of the affine subspace $\psi(s)+\cM$ and the submanifold $\psi(\Omega_0)$, for some neighborhood $\Omega_0$ of $s$ in $\R^d$, is nonplanar.    

The submanifold $(\Omega,\psi,\R^n)$ is called {\em nonplanar}, if it is at all $s\in\Omega$.
\end{definition}

By naturally parameterizing curve in the above definition, we mean the following: in view of the constant rank theorem, for some neighborhood $\Omega_0$ of $s$ in $\R^d$ the set $\psi(\Omega_0)\cap (\psi(s)+\cM)$ is a  one-dimensional submanifold, and we parameterize it by the curve $\zeta:(r_1,r_2)\to (\psi(s)+\cM)\cap \psi(\Omega_0)$ for some $r_1<0<r_2$ such that $\zeta(0)=\psi(s)$ and $\R\zeta^{(1)}(0)=\cT\cap \cM$. 

We note that for a $n$-times differentiable map $\psi:\Omega\subset\R^1\to \R^n$, the one-dimensional manifold  $(\Omega,\psi,\R^n)$ is nonplanar at $s\in\Omega$ if and only if $\psi^{(1)}(s),\ldots,\psi^{(n)}(s)$ are linearly independent; here we have $\cT=\R\psi^{(1)}(s)$, and we pick $\cM=\R^n$ and $\zeta(r)=\psi(s+r)$ in the above definition.

\begin{theorem} \label{thm:Dirichlet}
Let $(\Omega,\psi,\R^n)$ be a $(n+1)$-times  differentiable, nonplanar submanifold. Then given an infinite set $\cN\subset\N$, for Lebesgue a.e.\ $s\in\Omega\subset\R^d$ and $\mbz=\psi(s)$, for any $0<\lambda<1$, there exist infinitely many $N\in\cN$ (depending on $s$) such that there is no integral solution to \eqref{eq:DTA} and no integral solution to \eqref{eq:DTB}.

In particular, $\psi(s)$ is not $\DTA{\lambda}$ or $\DTB{\lambda}$ for a.e.\ $s\in \Omega$ and any $0<\lambda<1$. 
\end{theorem}

\subsubsection*{Comparing the notions of nondegeneracy: $l$-nondegenerate (after Kleinbock and Margulis) versus nonplanar (after Pyartli)}

After Kleinbock and Margulis~\cite[Theorem~A]{Klein+Mar:Annals98}, for any $l\geq 1$, any $C^l$-map $\psi:\Omega\to\R^n$, where $\Omega\subset\R^d$ is open, and any $s\in \Omega$, if the partial derivatives $\partial_{i_k}\cdots\partial_{i_1}\psi(s)\in\R^n$ for all $1\leq i_j\leq d$ and $1\leq k\leq l$ span $\R^n$, then we say that {\em $\psi$ is $l$-nondegenerate at $s$}. And $\psi$ as above is called {\em $l$-nondegenerate}, if it is $l$-nondegenerate at every $s\in\Omega$. 

By \cite[Lemma 5]{Pyartli69}, if a submanifold $(\Omega,\psi,\R^n)$ is nonplanar at a point $s\in\Omega$ as in Definition~\ref{def:Pyartli}, then $\psi$ is $n$-nondegenerate at $s$.

Conversely, let $\psi:\Omega\subset \R^d\to \R^n$ be a $n$-times differentiable map which is $l$-nondegenerate for some $l\geq n$. If $d=1$, then by \cite[Corollary~3.3]{Shah-Yang:2022}) there exists a countable closed set $Z\subset\Omega$ such that the one-dimensional manifold $(\Omega\setminus Z,\psi,\R^n)$ is nonplanar. For $d>1$, if $\psi$ is an immersion and an analytic map, then one can show that there exists a closed subset $Z$ of $\Omega$ contained in a union of countably many $(d-1)$-dimensional analytic submanifolds of $\R^d$, such that the submanifold $(\Omega\setminus Z,\psi,\R^n)$ is nonplanar; here $Z$ is Lebesgue null.

\bigskip
As shown by Kleinbock and Weiss~\cite{Kleinbock+Weiss:Dirichlet} and Shah~\cite[Section~2]{Shah:Dirichlet}, using Dani's correspondence, Theorem~\ref{thm:Dirichlet} can be derived as a consequence of Theorem~\ref{thm:main-manifold}, which is about equidistribution of expanding translates of measures on submanifolds immersed in a homogeneous space. To formulate our results in greater generality, we use the following definition. 

\begin{definition}[Projectively nonplanar map]
\label{def:non-de} 
A $n$-times differentiable curve $\rho:I\to \mathcal{V}$, where $I$ is an open subset of $\R$ and $\mathcal{V}$ is a $k$-dimensional subspace of $\R^{n+1}$, is called \emph{projectively nonplanar}\/ in $\mathcal{V}$ if for each $s\in I$, the vectors $\rho(s),\rho^{1}(s),\ldots,\rho^{k-1}(s)$ form a basis of $\mathcal{V}$.  

Let $n,d\in\N$ and $d\leq n$. Let $\phi$ be a $n$-times differentiable map from an open subset $\Omega$ of $\R^d$ to $\R^{n+1}$. We say that $\phi$ is \emph{projectively nonplanar}\/ at $s\in \Omega$ if the following conditions are satisfied: The tangent space $\cT:=D\phi(s)(\R^d)$ has dimension $d$, $\phi(s)\not\in \cT$, 
    there exists a $(n-d+2)$-dimensional subspace $\cL$ of $\R^{n+1}$ containing $\phi(s)$ such that $\cT+\cL=\R^{n+1}$, $\dim(\cT\cap\cL)=1$, and for some neighborhood $\Omega_0$ of $s$ in $\R^d$, the curve naturally parameterizing the one-dimensional submanifold $\phi(\Omega_0)\cap \cL$ is projectively nonplanar in $\cL$. 
    
We say that the map $\phi$ is \emph{projectively nonplanar}\/ if it is projectively nonplanar at all $s\in\Omega$.
\end{definition}

Again, by naturally parameterizing curve in the above definition, we mean the following: in view of the constant rank theorem, the set $\phi(\Omega_0)\cap \cL$ is a one-dimensional submanifold, and we parameterize it by a curve $\rho:(r_1,r_2)\to \phi(\Omega_0)\cap \cL$ for some $r_1<0<r_2$ such that $\rho(0)=\phi(s)$ and $\R\rho^{(1)}(0)=\cT\cap\cL$. 

In the special case of $d=1$, a curve $\phi:\Omega\to \R^{n+1}$ is projectively nonplanar at $s\in\Omega$ if and only if the vectors $\phi(s),\phi^{(1)}(s),\ldots,\phi^{(n)}(s)$ form a basis of $\R^{n+1}$; here we have $\cT=\R\phi^{(1)}(s)$, and we pick $\cL=\R^{n+1}$ and $\rho(r)=\phi(s+r)$ in Definition~\ref{def:non-de}.

\medskip
We note that a submanifold $(\Omega,\psi,\R^n)$ is nonplanar at $s$ according to Definition~\ref{def:Pyartli} if and only if the map $\phi$, defined by $\phi(x):=(1,\psi(x))\in\R\times\R^n\cong \R^{n+1}$ for all $x\in\Omega$, is projectively nonplanar at $s$. In view of the notation in the corresponding definitions, if we identify $\R^n$ with $\{0\}\times\R^n$, then $\cT\subset \R^n$, and $\cL=\R\phi(s)+\cM$, or $\cM=\cL\cap\R^n$.

\subsubsection*{Notation} Let $1\leq d\leq n$, and $G=\SL(n+1,\R)$. For $t>0$, let 
\[
a(t):=\diag(t^{n},t^{-1},\ldots,t^{-1})\in G.
\]
Let $\M(n+1,\R)$ denote the set of $(n+1)\times(n+1)$ real matrices. Let $\Omega\subset\R^d$ be open and $\Phi:\Omega\to G$ be a continuous map. Then for any $s\in \Omega$, 
\begin{equation} \label{eq:Phi}
a(t)\Phi(s)=t^nJ_0\Phi(s)+t^{-1}J_n\Phi(s),
\end{equation}
where $J_0=\diag(1,0,\ldots,0),\, J_{n}=\diag(0,1,\dots, 1)\in\M(n+1,\R)$. For any $g\in\M(n+1,\R)$, we identify $J_0g$ with the top row of $g$ which is realized as an element of $\R^{n+1}$. We define $\phi:\Omega\to\R^{n+1}$ by $\phi(s)=J_0\Phi(s)\in\R^{n+1}$ for all $s\in\Omega$. 

\begin{theorem} \label{thm:main-manifold}
Suppose the map $\phi$ as above is $(n+1)$-times differentiable and projectively nonplanar. Let $L$ be a Lie group containing $G$, $\Lambda$ a lattice in $L$, and let $x\in L/\Lambda$. Then there exists $E_x\subset \Omega$ which is contained in a countable union of $C^1$ submanifolds of $\Omega$ of dimension $d-1$ such that the following holds: For every $s\in \Omega\setminus E_x$, and any bounded open convex neighborhood  $C$ of $0$ in $\R^d$, and any $f\in C_c(L/\Lambda)$,
\begin{equation} \label{eq:local-limit}
\lim_{t\to\infty} \frac{1}{\vol(C)} \int_{C} f(a(t)\Phi(s+t\inv \eta)x)\,d\eta=\int_{\cl{Gx}} f\,d\mu_x,
\end{equation}
where $\vol(\cdot)$ denotes the Lebesgue measure on $\R^d$, and $\mu_x$ is the unique $G$-invariant probability measure on $\cl{Gx}$ whose support equals $\cl{Gx}$.

In particular, for any probability measure $\nu$ on $\Omega$ which is absolutely continuous with respect to the Lebesgue measure, and any $f\in C_c(L/\Lambda)$,
\begin{equation} \label{eq:global-limit}
\lim_{t\to\infty} \int_{\Omega} f(a(t)\Phi(\eta)x)\,d\nu(\eta)=\int_{\cl{Gx}} f\,d\mu_x.
\end{equation}
\end{theorem}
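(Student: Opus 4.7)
The plan is to follow the Dani--Margulis--Ratner framework for equidistribution of unipotent-invariant measures, in the form developed in~\cite{Shah:Dirichlet} for analytic curves. Let $\mu_{s,t}$ denote the probability measure on $L/\Lambda$ obtained by pushing forward normalized Lebesgue measure on $C$ under $\eta\mapsto a(t)\Phi(s+t^{-1}\eta)x$. I would show that for a.e.\ $s\in\Omega$, every weak-$*$ subsequential limit of $\{\mu_{s,t}\}_{t\to\infty}$ equals $\mu_x$. A density argument on $C_c(L/\Lambda)$ and Fubini then yield both~\eqref{eq:local-limit} and~\eqref{eq:global-limit}.

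The first step is non-divergence: for a.e.\ $s\in\Omega$ the family $\{\mu_{s,t}\}_{t\geq 1}$ is tight in $L/\Lambda$. This would be obtained by applying Kleinbock--Margulis-type non-divergence estimates on $\SL(n+1,\R)/\SL(n+1,\Z)$ to the curves $\eta\mapsto a(t)\Phi(s+t^{-1}\eta)$ acting on $\R^{n+1}$ and its exterior powers, where the required $(C,\alpha)$-good property for the component functions of $\phi$ along each fixed direction follows from the Pyartli nondegeneracy.

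The second step is to show that every weak-$*$ limit $\mu$ is invariant under a nontrivial closed subgroup $W$ of $L$ generated by one-parameter unipotent subgroups. Heuristically, the matched scalings between $a(t)$ (expanding by $t^n$ on the top row, contracting by $t^{-1}$ on the remaining rows) and the window parameter $t^{-1}$ mean that varying $\eta\in C$ traces, in the $t\to\infty$ limit, an orbit piece of a unipotent subgroup normalized by $a(t)$; this is the translation-of-parameter argument in the spirit of Mozes--Shah and~\cite{Shah:Dirichlet}, where condition~(1) of Definition~\ref{def:non-de} ensuring $D\phi(s)$ is injective is used to produce a $W$ of the correct dimension. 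Ratner's measure classification then represents $\mu$ as an average of algebraic measures supported on closed orbits $Hy$ of subgroups $H\supseteq W$.

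The final and most delicate step is to rule out concentration on the singular sets associated with proper intermediate subgroups $H\subsetneq G$, so that Ratner's theorem forces $\mu$ to be the $G$-invariant probability measure on $\cl{Gx}$. This is handled by Dani--Margulis linearization: for each relevant $H$ one passes to a finite-dimensional representation $V_H$ of $L$ with a distinguished vector $p_H$ whose stabilizer encodes $H$, and one must bound the $\mu_{s,t}$-mass of tubular neighborhoods of the $L$-orbit of $p_H$. This reduces to a \emph{quantitative local avoidance} statement for the vector-valued curves $\eta\mapsto a(t)\Phi(s+t^{-1}\eta)\,p_H$: their images must not concentrate near any proper affine subvariety of $V_H$. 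The main obstacle, and the chief novelty of the paper (resolving the conjecture in~\cite[\S5]{Shah:ICM}), is establishing this avoidance in the merely $C^{n+1}$ setting, where the $(C,\alpha)$-good property is \emph{not} preserved under composition with the polynomial map $g\mapsto g\,p_H$. The Pyartli condition, and specifically part~(\ref{itm:v}) of Definition~\ref{def:non-de} which supplies an auxiliary direction $0\neq v\in\cT$ along which $\phi$ restricts to a genuinely nondegenerate curve into $\R v+\cL$, is engineered precisely so that avoidance can be reduced to a one-variable problem along $v$; a Taylor expansion up to order $n+1$, combined with an induction on the order of vanishing of the tangent jet of $\phi$, is expected to substitute for the missing $(C,\alpha)$-good bound and yield the needed quantitative avoidance. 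Granting this, no proper intermediate $H$ can support positive mass of $\mu$, and hence $\mu=\mu_x$, completing the proof.
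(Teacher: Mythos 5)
Your outline follows the Dani--Margulis--Ratner linearization framework that was used in \cite{Shah:Dirichlet} for analytic curves, but this is precisely the route that the present paper \emph{abandons}, and the step you identify as the crux is left as a gap in your argument. The paper's introduction states that the $(C,\alpha)$-good property does not survive composition with non-linear polynomial maps $g\mapsto g\,p_H$, which is why the analytic hypothesis was needed in~\cite{Shah:Dirichlet}, and that overcoming this requires a quantitative local avoidance result conjectured in~\cite[\S5]{Shah:ICM}. You correctly locate this as the missing ingredient, but your proposed substitute --- ``a Taylor expansion up to order $n+1$, combined with an induction on the order of vanishing of the tangent jet'' --- is not a proof, and there is no indication it can be made to work: it is exactly the barrier that left the problem open. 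In particular, nothing in your proposal explains how Definition~\ref{def:non-de}(\ref{itm:v}), the existence of a single auxiliary direction $v$ along which $\phi$ is a nondegenerate curve, controls the behaviour of $a(t)\Phi(s+t^{-1}\eta)\,p_H$ near the singular tubes in an arbitrary representation $V_H$; the Pyartli condition is a condition on $\phi$ in $\R^{n+1}$, not on all exterior and tensor powers simultaneously.

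The paper's actual strategy bypasses linearization entirely. The Basic Identity (Lemma~\ref{lemma:basic-identity}) shows that, thanks to nondegeneracy, one can choose a rank-$n$ nilpotent $B_s$ with
$a(\abs t)\Phi(s+t^{-1})=(I+o(t^{-1})t)\,\xi_s(\sigma)\,(I-tB_s)^{-1}$,
so after an exact algebraic change of variables the shrinking-and-expanding curve becomes $a(\abs\eta)\xi_s(\sign\eta)P_s(t\eta^{-1})x$ with $P_s$ a \emph{polynomial} map into $G$. Equidistribution then follows at once from Shah's polynomial-trajectory theorem (Theorem~\ref{thm:shah:polynom}), where non-divergence and Ratner classification are already built in; no non-divergence estimate or avoidance bound for the original $C^{n+1}$ map is ever needed. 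The $d\geq 2$ case is reduced to $d=1$ by Pyartli's twisting and a polar fibering, and the exceptional set $E_x$ is shown (Propositions~\ref{prop:basic}--\ref{prop:null}) to be a countable union of preimages $\phi^{-1}(W)$ of proper subspaces, which are Lebesgue-null by a short one-variable Taylor argument. So while your framework is a reasonable first guess, it leaves the hardest step unresolved, and the paper's proof is structurally different: it trades the open avoidance problem for an algebraic identity plus a known equidistribution theorem.
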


We remark that due to Ranter's orbit closure theorem~\cite{Ratner:orbit}, $\cl{Gx}=Fx$ is a finite volume homogeneous space of a closed Lie subgroup $F$ of $L$ containing $G$, and any $G$-invariant finite Borel measure on $Fx$ whose support equals $Fx$ is also $F$-invariant.

\subsubsection*{Deduction of Theorem~\ref{thm:Dirichlet} from Theorem~\ref{thm:main-manifold}} For this purpose, we will apply Dani's correspondence principle as described in \cite[\S2.1]{Kleinbock+Weiss:Dirichlet}. First, we embed $G$ into $L=G\times G$ via the map $\rho$ as follows~\cite[\S1.0.1]{Shah:Dirichlet}: Let $\{e_i:1\leq i\leq n+1\}$ denote the standard basis of $\R^{n+1}$ and $\mathfrak{w}$ be the matrix such that $\mathfrak{w}e_i=e_{n-i+2}$ for all $i$. Let $\rho(g)=(g,\mathfrak{w}(^t\!g^{-1})\mathfrak{w}^{-1})$ for all $g\in G$. Then $\rho: G\to L$ is an injective homomorphism. Then as in \cite[\S2]{Shah:Dirichlet}, we can derive Theorem~\ref{thm:Dirichlet} from  \eqref{eq:global-limit} for $\Phi(s)=\bigl(\begin{smallmatrix} 1 & \psi(s)\\ 0 & I_n\end{smallmatrix}\bigr)$, where $I_n$ denotes the $n\times n$ identity matrix, the lattice $\Lambda=\SL(n+1,\Z)\times\SL(n+1,\Z)$ in $L$, and $x=\Lambda$; here $Gx$ is closed. 

\bigskip
To justify \eqref{eq:global-limit} for differentiable maps, we need to prove the equidistribution of local expansion given by \eqref{eq:local-limit}.
Our proof of \eqref{eq:local-limit} is very different from the arguments of \cite{Shah:Dirichlet} for proving \eqref{eq:global-limit} for analytic maps. A new basic identity \eqref{eq:basic_identity} observed in this article allows us to describe the limiting distribution of expansion of shrinking pieces in the curve ($d=1$) case using equidistribution of long polynomial trajectories on homogeneous spaces~\cite{Shah:polynom}. 

\subsubsection*{Some earlier results and shrinking speed}
Let $n=d=1$, $G=L=\SL(2,\R)$, a lattice $\Lambda$ in $L$, and 
$\Phi(s)=\bigl(\begin{smallmatrix}1&s\\&1\end{smallmatrix}\bigr)$.  In  \cite{Hej:Hua00,Str:long}, it was proved that if $s\mapsto\Phi(s)x$ is a closed horocycle, then for any $f\in C_c(L/\Lambda)$, for any sequence $t_i\to\infty$, and any intervals $[\alpha_i,\beta_i]\subset \R$ such that for some $\delta>0$, $\beta_i-\alpha_i\geq t^{-1+\delta}$ for all $i$, then 
\[
\lim_{i\to\infty} \frac{1}{\beta_i-\alpha_i} \int_{\alpha_i}^{\beta_i} f(a(t_i)\Phi(\eta)x)\,d\eta=\int_{L/\Lambda} f\,d\mu_{L/\Lambda}.
\]
Here the shrinking speed is slower compared to \eqref{eq:local-limit}, but the equidistribution occurs for shrinking around every $s$; that is, for some $\delta>0$ we let $\alpha_i=s-t_i^{-1+\delta}$  and $\beta_i=s+t_i^{-1+\delta}$ for each $i$. Later in Proposition~\ref{prop:E-dense} we will see that for $\Lambda=\SL(2,\Z)$ and $x=e\Lambda$,  \eqref{eq:local-limit} fails to hold for all rational $s$. In this case, the shrinking speed of $t^{-1}$ is indeed optimal as noted in \cite[\S2]{Hej:Hua00} and \cite[Page 509]{Str:long}. 

For the horospherical case of $L=G$, and $\psi(s)=s$ for all $s\in\R^n$, as in \cite{KM-exp,KM-eff} using the exponential mixing, one can deduce equidistribution for expanding translates by $a_t$ for sufficiently slowly shrinking horospherical balls. The proofs of \cite[Lemma~16]{Gorodnik:lattice} and \cite[Theorem~20]{Gorodnik:frames}, which use Ratner's theorem and linearization technique, yield the equidistribution of expanding translates of sufficiently slowly shrinking horospherical balls for the subgroup action on possibly larger homogeneous spaces $L/\Lambda$. Thus, the shrinking speed of $t^{-1}$ in \eqref{eq:local-limit} is faster than the previous results, but it may not be optimal.

\subsubsection*{Organization of the article}
We establish the basic identity mentioned above in \S\ref{sec:nondeg}.
In \S\ref{sec:curve}, we  combine the result on limiting distributions of polynomial trajectories with the basic identity to obtain an algebraic description of the limiting distribution of the stretching translates of the shrinking segments of the curve ($d=1$) 
around any given point $\Phi(s)x$ in $\Phi(\Omega)x$ (Theorem~\ref{thm:main-local}). In \S\ref{sec:d-shrinking}, we will derive the analogous result for shrinking balls around any given point in the submanifold  (Theorem~\ref{thm:d-shrinking}). For this purpose, we will fiber the shrinking balls into shrinking projectively nonplanar curve segments using a twisting trick due to Pyartli~\cite{Pyartli69}. A point $s\in\Omega$ is called exceptional if 
the limiting distribution of expanding translates of the shrinking balls in $\Phi(\Omega)x$ about the point $\Phi(s)x$ is not $G$-invariant. In \S\ref{sec:Ex}, we will obtain a geometric description of the set of exceptional points (Proposition~\ref{prop:E-H}) and prove that it is contained in a countable union of submanifolds of dimension $d-1$ which have zero Lebesgue measure (Proposition~\ref{prop:null}). Finally, we will show that in some standard examples, the exceptional points are dense in $\Omega$ (Proposition~\ref{prop:E-dense}).

\section{Basic identity} \label{sec:nondeg}
 The main new ingredient in the proof of 
Theorem~\ref{thm:main-manifold} is the following:

\begin{lemma}[Basic Identity] \label{lemma:basic-identity}
Let $d=1$, $\Omega\subset\R$ open, $\Phi:\Omega\to G$ a continuous map, and $s\in\Omega$ be such that the map $\phi:=J_0\Phi:\Omega\to\R^{n+1}$ is $(n+1)$-times  differentiable and projectively nonplanar at $s$. Then there exists a nilpotent matrix $B_s\in \M(n+1,\R)$ of rank $n$ such that for any $t\neq 0$ with $s+t^{-1}\in \Omega$, we have
\begin{equation} \label{eq:basic-identity}
a(\abs{t})\Phi(s+t^{-1})=(I+o(t^{-1})t)\xi_s({\sigma})\bigl(I+\sum_{k=1}^{n} t^{k}B_{s}^{k}\bigr),
\end{equation}
where $I$ denotes the identity matrix, $\sigma=t/\abs{t}=\pm1$, $\xi_s(\pm 1)\in G$, and $o(t^{-1})\in \M(n+1,\R)$ is such that $o(t^{-1})t\to 0$ as $\abs{t}\to\infty$.
\end{lemma}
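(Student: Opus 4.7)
The natural route is to read $B_s$ off from a Taylor expansion of $\phi$ at $s$. I would set $v_k:=\phi^{(k)}(s)/k!\in\R^{n+1}$ (row vectors) for $0\le k\le n+1$; by nondegeneracy $\{v_0,\ldots,v_n\}$ is a basis of $\R^{n+1}$. Then let $B_s$ be the unique matrix acting on row vectors from the right by $v_kB_s=v_{k-1}$ for $1\le k\le n$ and $v_0B_s=0$. This is a single Jordan block in the $\{v_k\}$-basis, hence nilpotent with $B_s^{n+1}=0$ and rank $n$, and $(I-tB_s)\inv=\sum_{k=0}^n t^kB_s^k$ is a matrix polynomial of degree $n$ in $t$.

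Rather than pulling $\xi_s(\sigma)$ out of a hat, I would define it as the limit
\[
\xi_s(\sigma):=\lim_{\substack{t\to\infty\\ \sign(t)=\sigma}} a(\abs{t})\,\Phi(s+t\inv)(I-tB_s),
\]
and verify that this limit exists and lies in $G$. For the top row, $(n+1)$-fold differentiability of $\phi$ yields the Taylor expansion $\phi(s+t\inv)=\sum_{k=0}^{n+1}t^{-k}v_k+o(t^{-(n+1)})$; scaling by $\abs{t}^n=\sigma^n t^n$ produces a polynomial in $t$ of degree $n$ plus a tail $\sigma^n t\inv v_{n+1}+o(t\inv)$, and right-multiplying by $(I-tB_s)$ causes the $t^k$ terms for $k\ge1$ to telescope (using $v_kB_s=v_{k-1}$), leaving $\sigma^n(v_n-v_{n+1}B_s)+o(1)$. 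For the remaining $n$ rows $\Psi(s+t\inv)$, only $C^1$-regularity of $\Phi$ is needed: using $\abs{t}\inv t=\sigma$, the expression $\abs{t}\inv\Psi(s+t\inv)(I-tB_s)$ simplifies to $\sigma t\inv\Psi(s+t\inv)-\sigma\Psi(s+t\inv)B_s$ and converges to $-\sigma\Psi(s)B_s$ with error $O(t\inv)$.

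The payoff of defining $\xi_s(\sigma)$ as a limit is that $\xi_s(\sigma)\in G$ comes essentially for free: each of $a(\abs{t}),\,\Phi(s+t\inv),\,(I-tB_s)$ lies in $G=\SL(n+1,\R)$, which is closed in $\M(n+1,\R)$, so the componentwise limit is in $G$ --- no determinant calculation is required. To produce the form stated in \eqref{eq:basic-identity}, I would set $e_t:=a(\abs{t})\Phi(s+t\inv)(I-tB_s)-\xi_s(\sigma)$; every entry of $e_t$ is $o(1)$, so $E_t:=e_t\,\xi_s(\sigma)\inv$ satisfies $t\inv E_t=o(t\inv)$, i.e.\ $E_t=o(t\inv)\cdot t$ in the lemma's notation. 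Right-multiplying $a(\abs{t})\Phi(s+t\inv)(I-tB_s)=(I+E_t)\xi_s(\sigma)$ by $(I-tB_s)\inv$ then delivers the identity. The main bookkeeping obstacle is the top-row step, where one must expand $\phi$ to the full order $n+1$ and carefully cancel the large $t^k$ terms against $|t|^{n}t\cdot\phi(s+t\inv)B_s$; this cancellation is precisely what forces the differentiability hypothesis to be sharp at order $n+1$.
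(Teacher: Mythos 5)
Your proof is correct and takes essentially the same approach as the paper's: you define $B_s$ by the same recursion $v_kB_s=v_{k-1}$, $v_0B_s=0$, carry out the same Taylor expansion to order $n+1$ with the same telescoping cancellation against $(I-tB_s)$, and conclude $\xi_s(\sigma)\in G$ by closedness of $G$ exactly as the paper does. The only difference is presentational --- you introduce $\xi_s(\sigma)$ as the limit of $a(\abs{t})\Phi(s+t\inv)(I-tB_s)$ and then extract its value from the computation, whereas the paper writes the resulting formula $\sigma^n\xi_{s,1}+\sigma\xi_{s,2}$ first; the underlying derivation is identical.
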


We note that $B_{s}^{n}\neq 0$ and $B_{s}^{{n+1}}=0$, so
\begin{equation} \label{eq:Ps}
 P_{s}(t):=(I-tB_{s})^{-1}=I+\sum_{k=1}^{n} t^{k}B_{s}^{k}\in \SL(n+1,\R)=G. 
\end{equation}

\begin{proof}
We want to find a nilpotent matrix $B_{s}\in \M(n+1,\R)$ such that 
 \begin{equation*}
     \lim_{\abs{t}\to \infty} a(\abs{t})\Phi(s+t^{-1})(I-tB_{s})\in G.
 \end{equation*}
 
Let $t\neq 0$ such that $s+t^{-1}\in \Omega$. In view of \eqref{eq:Phi}, by Taylor's expansion, 
\[
J_0\Phi(s+t^{-1})=\phi(s+t^{-1})=\sum_{k=0}^{n+1} \frac{\phi^{(k)}(s)}{k!}t^{-k}+o(t^{-(n+1)}).
\]
For any $B_{s}\in \M(n+1,\R)$ and $\sigma=t/\abs{t}=\pm1$, we have
\begin{align}
    &a(\abs{t})J_0\Phi(s+t^{-1})(I-tB_{s}) 
    = \abs{t}^n\phi(s+t^{-1})(I-tB_{s}) \nonumber\\
    &=\sigma^n\Bigl(\bigl(\sum_{k=0}^{n+1} \frac{\phi^{(k)}(s)}{k!}t^{n-k}\bigr)+ o(t^{-1})\Bigr)
    (I-tB_{s})\nonumber\\
    &=\sigma^n
    \Bigl(-\phi(s)B_{s}t^{n+1}+\sum_{k=1}^{n}\bigl(\frac{\phi^{(k-1)}(s)}{(k-1)!}-\frac{\phi^{(k)}(s)}{k!}B_s\bigr) t^{n-k+1}\Bigr) 
    \nonumber\\ 
    &\quad +\sigma^n\xi_{s,1} + o(t^{-1})t, \label{eq:basic-phi}
\end{align}
where 
\begin{equation} \label{eq:xi1}
    \xi_{s,1}=
    \frac{\phi^{(n)}(s)}{n!}-\frac{\phi^{(n+1)}(s)}{(n+1)!}B_{s}.
\end{equation}
We want to choose $B_{s}$ such that all the coefficients of positive powers of $t$ vanish in \eqref{eq:basic-phi}; in other words, we want
\begin{equation} \label{eq:Bs}
    \phi(s)B_{s}=0 \text{ and } \frac{\phi^{(k)}(s)}{k!}  B_{s} = \frac{\phi^{(k-1)}(s)}{(k-1)!} \text{ for $1\leq k\leq n$}. 
\end{equation}

By our assumption, $\{\phi^{(k)}(s)/{k!}:0\leq k\leq n\}$ is a basis of $\R^{n+1}$. Therefore there exists a unique matrix $B_s$ such that \eqref{eq:Bs} holds. We note that with respect to the basis $\{\phi^{(k)}(s)/{k!}:0\leq k\leq n\}$ of $\R^{n+1}$ and the action from the right, $B_s$ is a strictly lower triangular nilpotent matrix of rank $n$. In particular, $\det(I-tB_s)=1$ for all $t\in\R$. 

Now  by \eqref{eq:basic-phi} and \eqref{eq:Bs}, we have the following key identity:
\begin{equation}
\label{eq:basic1}
a(\abs{t})J_0\Phi(s+t^{-1})(I-tB_{s})=\sigma^n \xi_{s,1} + o(t^{-1})t.
\end{equation}
Also, since $\Phi$ is differentiable at $s$,
\begin{align}
    a(\abs{t})J_n\Phi(s+t^{-1})(I-tB_{s})
&=\abs{t}^{-1}(J_n\Phi(s)+O(t^{-1}))(I-tB_{s}) \nonumber\\
&=\sigma \xi_{s,2}+O(t^{-1}), \label{eq:basic2}
\end{align} 
where 
\begin{equation} \label{eq:xi2}
\xi_{s,2}=-J_n\Phi(s)B_{s}.
\end{equation}

In view of \eqref{eq:Phi}, combining \eqref{eq:basic1} and \eqref{eq:basic2}: 
\begin{equation} \label{eq:basic_identity}
    a(\abs{t})\Phi(s+t^{-1})(I-tB_{s})=\xi_s(\sigma)+o(t^{-1})t,
\end{equation}
where in view of \eqref{eq:xi1} and \eqref{eq:xi2}, $\sigma=t/\abs{t}=\pm1$ and 
\begin{equation} \label{eq:xi}
\xi_s(\sigma)=\sigma^n\xi_{s,1}+\sigma\xi_{s,2}.
\end{equation}

Now \eqref{eq:basic-identity} follows from \eqref{eq:basic_identity}. Since the left hand side of \eqref{eq:basic_identity} belongs to $G$ for all $t$, by taking $t\to \pm\infty$, we get $\xi_s(\pm1)\in G$.
\end{proof}

The basic identity~\eqref{eq:basic-identity} was inspired by \cite[Proposition~A.0.1]{Pengyu-thesis}, which involved an intricate study of interactions of linear dynamics of intertwining copies of $\SL(2,\R)$ in $G$ and their Weyl group elements using \cite[Lemma~4.1]{Shah+Yang:Dirichlet}.

\section{Limiting distribution of polynomial trajectories and stretching translates of shrinking curves}

\label{sec:curve}

Our proof of Theorem~\ref{thm:main-manifold} for $d=1$ is based on Lemma~\ref{lemma:basic-identity} and the following result on limiting distribution of polynomial trajectories on homogeneous spaces, proved using Ratner's description~\cite{Ratner:measure} of ergodic invariant measures for unipotent flows. 

\subsubsection{Notation} \label{not:cHx} Let $L$ be a Lie group containing $G$ and $\Lambda$ be a lattice in $L$. Let $x\in L/\Lambda$. Let $\cH_x$ denote the collection of all connected Lie subgroups $H$ of $L$ such that $Hx$ is closed and admits an $H$-invariant probability measure, say $\mu_H$, which is ergodic with respect to an $\Ad_L$-unipotent one-parameter subgroup of $L$. Then $\cH_x$ is countable \cite[Theorem~1.1]{Ratner:measure}, \cite[Proposition 2.1]{Dani+Mar:limit}.  

\begin{theorem}[{Shah~\cite{Shah:polynom}}] \label{thm:shah:polynom}
Let $Q:\R\to G=\SL(n+1,\R)$ be a map whose each coordinate is a polynomial, and $Q(\R)$ contains the identity element $I$. Let $H$ be the smallest Lie subgroup of $L$ containing $Q(\R)$ such that $Hx$ is closed. Then $H\in \cH_x$, and for any $f\in C_c(L/\Lambda)$,
\[
\lim_{T\to\infty} \frac{1}{T}\int_{0}^T f(Q(t)x)\,dt =\int_{Hx} f\,d\mu_H.
\]
\end{theorem}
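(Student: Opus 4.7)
The plan is to adapt the Ratner--Dani--Margulis framework for equidistribution of unipotent trajectories to the polynomial setting. The first step is non-divergence: show the empirical measures $\mu_{T} := T\inv\int_{0}^{T} \delta_{Q(t)x}\,dt$ form a tight family on $L/\Lambda$. Since the coordinates of $Q$ are polynomials of degree at most $\deg Q$, they are uniformly $(C,\alpha)$-good in the sense of Kleinbock--Margulis, so their quantitative non-divergence theorem applied to polynomial maps into $G \subset L$ prevents escape of mass into the cusp of $L/\Lambda$. Extract any weak-$*$ limit $\mu$; by tightness, it is a probability measure.

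The heart of the argument is to show $\mu$ is invariant under a nontrivial $\Ad_L$-unipotent one-parameter subgroup $\{u(r)\}_{r\in\R}$ of $L$. Because $Q(\R)$ need not be a subgroup, Ratner's classical shearing lemma is unavailable and must be replaced by a polynomial shearing argument: for times $t$ and $t+h$ on the scale $T$, the displacement $Q(t+h)Q(t)\inv$ is polynomial in $h$ with $t$-dependent coefficients, and after normalizing $h$ so that the leading coefficient has bounded size, the resulting direction converges along a subsequence of $\mu$-generic $t$ to a nilpotent element $X \in \Lie(L)$. A Birkhoff-type averaging then produces a one-parameter unipotent $u(r) := \exp(rX)$ under which $\mu$ is invariant.

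Given this $\{u(r)\}$-invariance, Ratner's measure classification decomposes $\mu$ as an integral of algebraic measures $\mu_{H(y)}$ with $H(y) \in \cH_y$ and $u(\R)\subset H(y)$. To identify $H(y)$ with $H$ for $\mu$-a.e.\ $y$, I would invoke Dani--Margulis linearization: each proper $H'\in\cH_x$ corresponds to a proper subvariety in a suitable finite-dimensional linear representation $V$ of $L$, and by the minimality in the definition of $H$, the composition of $t\mapsto Q(t)$ with the equivariant realization map is a nonzero polynomial in $V$. A further $(C,\alpha)$-good non-divergence estimate in $V$, combined with countability of $\cH_x$ (so that one may sum over all proper $H'$), shows that $\mu$ gives zero measure to a neighborhood of the singular set attached to each proper $H'$. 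Hence $\mu$ is supported on the $H$-generic stratum, $\mu=\mu_H$, and in particular $H\in\cH_x$. Since every weak-$*$ limit equals $\mu_H$, the full limit exists and the stated equidistribution holds for all $f\in C_c(L/\Lambda)$.

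The main obstacle is the polynomial shearing step. Extracting a genuine unipotent one-parameter subgroup invariance of $\mu$ from the polynomial hypothesis alone, without any a priori subgroup structure on $Q(\R)$, requires a careful renormalization of polynomial displacement rates and a compactness argument producing a purely nilpotent limit generator; this is the central technical innovation of the cited paper and is what separates the polynomial setting from the classical unipotent one.
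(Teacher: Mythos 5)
The paper does not prove Theorem~\ref{thm:shah:polynom}; it is imported as a black box from \cite{Shah:polynom}, so there is no in-paper argument against which to compare. Judged on its own terms, your outline reproduces the standard four-stage framework for such equidistribution results: nondivergence of the empirical measures, unipotent invariance of weak-$*$ limits, Ratner's classification, and Dani--Margulis linearization against the countable family $\cH_x$. This is indeed the framework of the cited paper, modulo one anachronism: Kleinbock--Margulis (1998) postdates Shah (1994), which instead rests on the earlier nondivergence estimates of Dani and of Dani--Margulis for polynomial trajectories; those already give the requisite $(C,\alpha)$-type control in the representations that arise in the linearization step.

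The one real gap is that your unipotent-invariance step is a slogan, not an argument. You assert that the renormalized displacement $Q(t+h)Q(t)^{-1}$, after a scale change in $h$, has a limit direction that is a \emph{nilpotent} element $X \in \Lie(L)$, and you acknowledge this is the heart of the matter --- but nothing in your text establishes it. Nilpotency of the limit generator is not formal; it uses the algebraic structure of polynomial maps into $\SL(n+1,\R)$ in an essential way (roughly, that the leading-order behaviour at infinity of a polynomial curve through the identity in a linear algebraic group is governed by a unipotent direction), and this is exactly what distinguishes the polynomial setting from the classical unipotent one. Since every subsequent step --- Ratner, linearization, and the identification of the limit with $\mu_H$ --- is conditional on this invariance, your proposal correctly describes the shape of the proof but does not supply it at precisely the point where the result is nontrivial.
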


The following is its straightforward reformulation via change of variable. 

\begin{corollary} \label{cor:polynom}
Let the notation be as in Theorem~\ref{thm:shah:polynom}. Then for any $f\in C_c(L/\Lambda)$ and $c<d$,
\[
\lim_{T\to\infty} \frac{1}{d-c} \int_{c}^d f(Q(Ts)x)\,ds =\int_{Hx} f\, d\mu_H.
\]
\end{corollary}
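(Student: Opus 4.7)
The plan is to reduce the corollary to Theorem~\ref{thm:shah:polynom} by the linear substitution $u=Ts$ together with a reflection trick to handle negative arguments. Set $L:=\int_{Hx}f\,d\mu_H$. The substitution $u=Ts$ converts the target assertion into
\[
\frac{1}{T(d-c)}\int_{Tc}^{Td} f(Q(u)x)\,du\longrightarrow L\quad\text{as $T\to\infty$}.
\]

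For the case $0\le c<d$ I would split $\int_{Tc}^{Td}=\int_{0}^{Td}-\int_{0}^{Tc}$ (the second term being absent when $c=0$) and apply Theorem~\ref{thm:shah:polynom} to each endpoint sequence $Ta\to\infty$ ($a>0$) to conclude $\frac{1}{Ta}\int_{0}^{Ta}f(Q(u)x)\,du\to L$. Writing the average as
\[
\frac{d}{d-c}\cdot\frac{1}{Td}\int_{0}^{Td}f(Q(u)x)\,du\;-\;\frac{c}{d-c}\cdot\frac{1}{Tc}\int_{0}^{Tc}f(Q(u)x)\,du
\]
then yields $L$, since the two coefficients differ by $1$.

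To cover the case $c<0$ I would introduce the reflected polynomial $\tilde Q(t):=Q(-t)$. It is still a polynomial map $\R\to G$ with $I=\tilde Q(0)\in\tilde Q(\R)$, and $\tilde Q(\R)=Q(\R)$; hence the smallest Lie subgroup of $L$ containing $\tilde Q(\R)$ whose orbit of $x$ is closed is again $H\in\cH_x$, and the limiting measure produced by Theorem~\ref{thm:shah:polynom} for $\tilde Q$ is the same $\mu_H$. Applying Theorem~\ref{thm:shah:polynom} to $\tilde Q$ and then substituting $u=-t$ gives $\frac{1}{T}\int_{-T}^{0}f(Q(u)x)\,du\to L$. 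Combining this with the previous case via the splitting $[Tc,Td]=[Tc,0]\cup[0,Td]$ and weights $\tfrac{-c}{d-c}$, $\tfrac{d}{d-c}$ (or, if $d\le 0$, by applying the first-case argument throughout to $\tilde Q$) completes the proof. The only observation beyond routine change of variable is the invariance of $H$ under $Q\mapsto\tilde Q$; there is no real obstacle to this corollary.
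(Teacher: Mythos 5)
Your argument is correct and is exactly the ``straightforward reformulation via change of variable'' that the paper has in mind (the paper states the corollary without a written proof). The splitting of $[Tc,Td]$ into pieces touching $0$, the linear combination of averages with coefficients summing to $1$, and the reflection $\tilde Q(t)=Q(-t)$ -- noting $\tilde Q(\R)=Q(\R)$ so the subgroup $H$ and measure $\mu_H$ are unchanged -- are precisely the bookkeeping steps needed to make the change of variable rigorous for arbitrary $c<d$.
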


From this result, we can deduce its following variation. 

\begin{corollary} \label{cor:polynom-2}
Let the notation be as in Theorem~\ref{thm:shah:polynom}. Let $\rho:\R\to G$ be a measurable map and $\nu$ be an absolutely continuous finite Borel measure on $\R$. Then for any $f\in C_c(L/\Lambda)$, 
\begin{equation} \label{eq:polynom-2}
\int_{\R} f(\rho(\eta)Q(T\eta)x)\,d\nu(\eta) \stackrel{T\to\infty}{\longrightarrow} 
\int_{\R} \Bigl[ \int_{Hx} f(\rho(\eta)y)\,\mu_H(y)\Bigr]\,d\nu(\eta).
\end{equation}
\end{corollary}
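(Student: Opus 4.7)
The plan is to reduce Corollary~\ref{cor:polynom-2} to Corollary~\ref{cor:polynom} by a double approximation: replace the measurable map $\rho$ by a step function taking finitely many values, and replace the density of $\nu$ by a simple function supported on finitely many intervals. The key observation is that for any fixed $g\in G$, the left-translate $\tilde f_g(p):=f(gp)$ lies in $C_c(L/\Lambda)$, and Corollary~\ref{cor:polynom} applied to $\tilde f_g$ on any interval $[c,d]$ gives
\begin{equation*}
\frac{1}{d-c}\int_c^d f(g\,Q(T\eta)x)\,d\eta\;\xrightarrow{T\to\infty}\;\int_{Hx}f(gy)\,d\mu_H(y).
\end{equation*}
Thus \eqref{eq:polynom-2} is immediate when $\rho$ is piecewise constant and $d\nu$ is a finite linear combination of Lebesgue measures on disjoint intervals, and the general case will follow from this model case by a standard approximation argument.

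In detail, I would write $d\nu=h\,d\eta$ with $h\in L^1(\R)$, truncate $h$ to a compact interval at a cost controlled by $\|f\|_\infty$ times the $L^1$-tail, and invoke Lusin's theorem to find a compact set $K$ in this interval whose complement has Lebesgue measure less than $\epsilon$, on which $\rho|_K$ is continuous (hence uniformly continuous). Since $f\in C_c(L/\Lambda)$ is uniformly continuous with respect to a left $L$-invariant metric, partitioning $K$ into finitely many measurable pieces $\{I_j\}$ of sufficiently small diameter and choosing $g_j:=\rho(\eta_j^*)$ for some $\eta_j^*\in I_j$ achieves
\begin{equation*}
\sup_{y\in L/\Lambda}\bigl|f(\rho(\eta)y)-f(g_jy)\bigr|<\epsilon\quad\text{for all }\eta\in I_j.
\end{equation*}
Independently, $h\mathbf{1}_K$ is approximated in $L^1$ within $\epsilon$ by a step function $\sum_k c_k\mathbf{1}_{J_k}$ with each $J_k$ contained in some $I_j$. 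Applying Corollary~\ref{cor:polynom} to $\tilde f_{g_j}$ on each of the finitely many $J_k$, multiplying by the weights $c_k$, and summing, the $T\to\infty$ limit of the approximating sum differs from $\int_\R F(\eta)\,d\nu(\eta)$, where $F(\eta):=\int_{Hx}f(\rho(\eta)y)\,d\mu_H(y)$ is bounded and measurable, by $O(\epsilon)$. The total error in passing from the true integrand $f(\rho(\eta)Q(T\eta)x)$ to its approximation is also $O(\epsilon)$ uniformly in $T$, so sending $\epsilon\to 0$ completes the proof.

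The main obstacle is purely technical: orchestrating the joint approximation of the measurable function $\rho$ and the $L^1$ density $h$ so that only finitely many applications of Corollary~\ref{cor:polynom}, one per chosen translate $g_j$ and subinterval $J_k$, are required to conclude. Crucially, no uniform rate is needed from Corollary~\ref{cor:polynom}, and the compact support and boundedness of $f$ keep all cutoff errors controllable; neither continuity of $\rho$ nor any regularity of $h$ beyond integrability is exploited.
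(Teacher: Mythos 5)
Your proof follows essentially the same strategy as the paper's: reduce via Lusin's theorem to a setting where $\rho$ is well-approximated by locally constant functions, approximate the density by a step function, and apply Corollary~\ref{cor:polynom} once per interval--translate pair, with all errors controlled by $\|f\|_\infty$ and uniform continuity of $f$ in the left $L$-uniformity. The key observation that $\tilde f_g(p)=f(gp)$ is again in $C_c(L/\Lambda)$, and that one therefore only needs the autonomous Corollary~\ref{cor:polynom} finitely many times, is exactly the mechanism the paper uses.

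There is, however, a small technical slip in the orchestration. You approximate $h\mathbf{1}_K$ in $L^1$ by a step function $\sum_k c_k\mathbf{1}_{J_k}$ where each interval $J_k$ is \emph{contained in} some $I_j\subset K$. Since $K$ is only a Lusin compact set, it may contain no intervals at all (for instance, a fat Cantor set), in which case no such step function can approximate $h\mathbf{1}_K$ well and this step breaks down. The paper sidesteps this by first \emph{extending} the continuous restriction $\rho|_K$ to a continuous map defined on all of $\R$ (possible since $G$ is a connected manifold), after which the local approximations by constants are made around each point of the line, not merely on $K$. Equivalently, you could fix your argument by letting the $I_j$ be genuine intervals partitioning the truncated domain rather than pieces of $K$, choosing $g_j=\rho(\eta_j^*)$ for some $\eta_j^*\in K\cap I_j$, controlling the oscillation of $\rho$ on $K\cap I_j$ by uniform continuity of $\rho|_K$, and bounding the contribution of $I_j\setminus K$ by $\|f\|_\infty$ times the sup of the step coefficients times $|I\setminus K|$; this last product is made small by first truncating $h$ to a bounded function and only then invoking Lusin with a sufficiently small exceptional measure. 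With that reordering the argument is complete and matches the paper's.
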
 

\begin{proof} We can assume that $\abs{f}\leq 1$. And since $\nu$ is finite, due to Lusin's theorem, we can replace $\rho$ and $d\nu(\eta)/d\eta$ by continuous functions with compact support. 
Let $s\in\R$. Given $\epsilon>0$, there exists $\delta_0>0$ such that for all $\eta\in (s-\delta_0/2,s+\delta_0/2)$ and $y\in L/\Lambda$,
\[
\abs{(d\nu/d\eta)(\eta)-(d\nu/d\eta)(s)}\leq \epsilon \quad \text{and} \quad \abs{f(\rho(\eta)y)-f(\rho(s)y)}\leq \epsilon.
\]
Using these approximations and Corollary~\ref{cor:polynom}, for any $0<\delta<\delta_0$ there exists  $T_{s,\delta}\geq 1$ such that for all $T\geq T_{s,\delta}$,
\[
\Abs{\int_{s-\delta/2}^{s+\delta/2} f(\rho(\eta)Q(T\eta)x)\,d\nu(\eta) - 
\delta\cdot(d\nu/d\eta)(s)\cdot \int_{Hx} f(\rho(s)y)\,d\mu_H(y)}\leq 2\epsilon\delta.
\]
From this \eqref{eq:polynom-2} follows.
\end{proof}

\begin{theorem} \label{thm:main-local}
Let $d=1$ and the notation be as in Theorem~\ref{thm:main-manifold} and Notation~\ref{not:cHx}. Let $s\in \Omega$. Then there exists $H_s\in\cH_x$ such that the following holds: Let $\nu$ be an absolutely continuous finite Borel measure on $\R$.  Then for any $f\in C_c(L/\Lambda)$,
\begin{align}
&\lim_{t\to\infty} \int_{\R} f(a(t)\Phi(s+\eta t^{-1})x)\,d\nu(\eta) \nonumber\\
& =\int_{\R} \Big[\int_{H_sx} f(a(\abs{\eta})\xi_{s}(\sign(\eta))y)\,d\mu_{H_s}(y)\Bigr]d\nu(\eta), \label{eq:main-local}
\end{align}
where $\sign(\eta)=\eta/\abs{\eta}=\pm1$ and $\xi_s(\pm1)\in G$ are given by \eqref{eq:xi}. 

Moreover if $H_s\supset G$, then $\cl{Gx}=H_sx$, $\mu_x=\mu_{H_s}$, and 
\[
\lim_{t\to\infty} \int_{\R} f(a(t)\Phi(s+\eta t^{-1})x)\,d\nu(\eta)=\nu(\R)\cdot\int_{\cl{Gx}} f \,d\mu_{x}.
\]
\end{theorem}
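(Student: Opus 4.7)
The plan is to apply the Basic Identity (Lemma~\ref{lemma:basic-identity}) with its ``$t$'' replaced by $t/\eta$, so that using $a(t_1 t_2)=a(t_1)a(t_2)$ and $\sign(t/\eta)=\sign(\eta)$ for $t>0$, one rewrites
\[
a(t)\Phi(s+\eta t^{-1}) = a(\abs{\eta})\bigl(I+o_t(\eta)\bigr)\xi_s(\sign(\eta))\,P_s(t/\eta),
\]
where $P_s(\tau)=(I-\tau B_s)^{-1}$ is the polynomial from \eqref{eq:Ps} and $o_t(\eta)\to 0$ as $t\to\infty$ uniformly for $\eta$ in any compact subset of $\R\setminus\{0\}$. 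By uniform continuity of $f\in C_c(L/\Lambda)$, the task reduces to computing the limit of $\int_\R f(a(\abs{\eta})\xi_s(\sign(\eta))P_s(t/\eta)x)\,d\nu(\eta)$.

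The awkward argument $t/\eta$ inside $P_s$ is converted to a linear one by the substitution $\eta\mapsto 1/\eta$. Splitting the integral at $\eta=0$ and setting $\zeta=1/\abs{\eta}$ on each half, the pushforward of $\nu$ is a finite absolutely continuous measure $\tilde\nu_\pm$ on $(0,\infty)$ of density $\zeta^{-2}(d\nu/d\eta)(\pm 1/\zeta)$, while $P_s(t/\eta)$ becomes $P_s(\pm t\zeta)$. With $\rho_\pm(\zeta):=a(1/\zeta)\xi_s(\pm 1)$, each half takes the form
\[
\int_0^\infty f\bigl(\rho_\pm(\zeta)\,P_s(\pm t\zeta)\,x\bigr)\,d\tilde\nu_\pm(\zeta),
\]
which is the setting of Corollary~\ref{cor:polynom-2} applied to the polynomial $\tau\mapsto P_s(\pm\tau)$ with parameter $T=t$. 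Letting $H_s\in\cH_x$ be the smallest Lie subgroup of $L$ containing $P_s(\R)$ with $H_sx$ closed, the corollary supplies the limit $\int_0^\infty\int_{H_sx}f(\rho_\pm(\zeta)y)\,d\mu_{H_s}(y)\,d\tilde\nu_\pm(\zeta)$; reverting the substitution and recombining the two signs yields \eqref{eq:main-local}.

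The main nuisance is truncation: the error $o_t(\eta)$ is only uniform away from $\eta=0$, and the condition $s+\eta t^{-1}\in\Omega$ requires $\abs{\eta}$ bounded. Since $\nu$ is finite absolutely continuous, restricting to $\delta\leq\abs{\eta}\leq K$ loses at most $\|f\|_\infty\cdot\nu(\{\abs{\eta}<\delta\}\cup\{\abs{\eta}>K\})$ on both sides of \eqref{eq:main-local}, which is made arbitrarily small by taking $\delta$ small and $K$ large; the real conceptual content is already packaged in the Basic Identity, and Corollary~\ref{cor:polynom-2} carries the dynamics.

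For the ``moreover'' clause: if $H_s\supset G$ then $H_sx$ is closed and contains $Gx$, hence $H_sx\supset\cl{Gx}$, while $\mu_{H_s}$ is automatically $G$-invariant. Minimality of $H_s$ places it inside the subgroup $H$ realizing $\cl{Gx}=Hx$, so $H_sx=\cl{Gx}$; uniqueness of the $G$-invariant probability measure on $\cl{Gx}$ then forces $\mu_{H_s}=\mu_x$. Since $a(\abs{\eta})\xi_s(\sign(\eta))\in G$ preserves $\mu_x$, the inner integral in \eqref{eq:main-local} is $\eta$-independent and equals $\int_{\cl{Gx}}f\,d\mu_x$, producing the factorization $\abs{\nu}\cdot\int_{\cl{Gx}}f\,d\mu_x$.
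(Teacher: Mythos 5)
Your proposal matches the paper's own proof: apply the Basic Identity at the point $s$ with its parameter equal to $t/\eta$, factor out $a(\abs{\eta})$ using $a(t) = a(\abs{\eta})a(\abs{t/\eta})$, absorb the error term by uniform continuity of $f$ after truncating away from $\eta=0$, and then apply Corollary~\ref{cor:polynom-2} to the pushforward of $\nu$ under $\eta\mapsto\eta^{-1}$ with $\rho(\zeta)=a(\abs{\zeta}^{-1})\xi_s(\sign\zeta)$ and $Q=P_s$; the ``moreover'' clause follows from $G$-invariance of $\mu_{H_s}$ and minimality of $H_s$. The only difference is presentational (you split the inversion into $\pm$ halves and spell out the Jacobian and truncation more explicitly), so this is the same argument.
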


\begin{proof}
Let $\eta\neq 0$. For $t\gg 1$, writing $h=\eta\inv t$, by \eqref{eq:basic-identity} and \eqref{eq:Ps},
\begin{align*}
    a(t)\Phi(s +\eta t^{-1})x
    &=a(\abs{\eta})a(\abs{h})\Phi(s+h^{-1})x\\
    &=a(\abs{\eta})(I+o(h^{-1})h)\xi_s({\sign(\eta)})P_s(h)x\\
    &=(I+\abs{\eta}^{-(n+1)}o(h^{-1})h))a(\abs{\eta})\xi_s({\sign(\eta)})P_s(h)x\\
    &=(I+\abs{\eta}^{-(n+1)}o(t^{-1})t)a(\abs{\eta})\xi_s(\sign(\eta))P_s(t\eta^{-1})x.
\end{align*}

Since $f$ is bounded, we can ignore the integration over a small neighborhood of $0$, outside which $\abs{\eta}^{-(n+1)}o(t^{-1})t$ is close to $0$ uniformly for all large $t$. So by uniform continuity of $f$ we can ignore the factor $(I+\abs{\eta}^{-(n+1)}o(t^{-1})t)$, and hence
\begin{align}
&\lim_{t\to\infty} \int_{\R} f(a(t)\Phi(s+\eta t^{-1})x)\,d\nu(\eta) \nonumber\\ 
&=\lim_{t\to\infty} \int_{\R} f(a(\abs{\eta})\xi_s({\sign(\eta)})P_s(t\eta^{-1})x)\,d\nu(\eta).  \label{eq:int_main}
\end{align}

By \eqref{eq:Ps}, $P_s(0)=I$. Let $H_s\in\cH_x$ be the smallest subgroup containing $P_s(\R)$. Applying Corollary~\ref{cor:polynom-2} to the image of $\nu$ on $\R$ under the map $\eta\mapsto \eta\inv$, from \eqref{eq:int_main} we obtain \eqref{eq:main-local}. 
\end{proof}

\begin{remark} \label{rem:Hs-Bs}
We note that the subgroup $H_s$ as in \eqref{eq:main-local} of Theorem~\ref{thm:main-local} is the smallest Lie subgroup of $G$ such that the orbit $H_sx$ is closed and its Lie algebra contains $\{B_s^k:1\leq k\leq n\}$. To verify this, note that $P_s(\R)\subset H_s$ and $B_s^{n+1}=0$. Therefore
in view of \eqref{eq:Ps}, 
\[
\Lie(H_s)\ni\log(P_s(t))=\log((I-tB_s)^{-1})=-\log((I-tB_s))=\sum_{k=1}^n t^kB_s^k/k
\]
for all $t$ in some neighborhood of $0$ in $\R$.  Therefore, by the taking $k$-th derivative at $0$, we get $B_s^k\in\Lie(H_s)$ for all $1\leq k\leq n$. 
\end{remark}

\section{Stretching translates of shrinking submanifolds}

\label{sec:d-shrinking}

In this section, we will obtain the analog of Theorem~\ref{thm:main-local} for $d\geq 2$. 

\subsubsection*{Notation} 
Let $n\geq d\geq 2$, $\Omega$ be an open subset of $\R^d$, $G=\SL(n+1,\R)$, and let $\Phi:\Omega\to G$ be a continuous map. Throughout this section we fix $s\in\Omega$, and suppose that $\phi:=J_0\Phi:\Omega \to \R^{n+1}$ is $(n+1)$-times differentiable and projectively nonplanar at $s$ (Definition~\ref{def:non-de}). So the derivative $D\phi(s):\R^d\to \R^{n+1}$ of $\phi$ at $s$ is injective.  Let $\cT:=D\phi(s)(\R^d)\subset\R^{n+1}$. Pick an ordered basis  $e_1,\ldots,e_d$ of $\cT$. Pick an inner product on $\cT$, and let $\SOd\cong\SO(d)$ denote the special orthogonal group acting on $\cT$.

\begin{theorem} \label{thm:d-shrinking}
There exists a rational function $\xi_s:\SOd\to G$ such that the following holds. Let $L$ be a Lie group containing $G$, $\Lambda$ be a lattice in $L$, and $x\in L/\Lambda$. Then there exists a closed subgroup $H_s$ of $L$ such that $H_sx$ is closed and admits an $H_s$-invariant probability measure, say $\mu_{H_s}$, and for any open bounded convex neighbourhood ${C}$ of $0$ in $\R^d$ and any $f\in C_c(L/\Lambda)$, 
\begin{align} 
&\lim_{t\to\infty}\frac{1}{\vol(C)}\int_{{C}} f(a(t)\Phi(s+t^{-1}\eta)x)\,d\eta \nonumber\\
 =& \int_{g\in \SOd} \int_{0}^{r_g}  \left[\int_{H_sx} f(a(r)\xi_s(g)y)d\,\mu_{H_s}(y)\right]r^{d-1}\,dr\,dg,
\label{eq:shrinking-equi-d}
\end{align}
where $r_g:=\sup\{r\geq 0:rge_1\in D\phi(s)(C)\}$,
and $dg$ is the Haar integral on $\SOd$ such that $\int_{\SOd} d^{-1}(r_g)^d\,dg=1$.

Moreover if $H_s\supset G$, then $\cl{Gx}=H_sx$, $\mu_x=\mu_{H_s}$, and
\begin{equation} \label{eq:HsG}
\lim_{t\to\infty}\frac{1}{\vol(C)}\int_{{C}} f(a(t)\Phi(s+t^{-1}\eta)x)\,d\eta = \int_{\cl{Gx}} f\,d\mu_x.
\end{equation}
\end{theorem}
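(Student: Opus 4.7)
The strategy is to fiber the $d$-dimensional shrinking ball $s + t^{-1}C$ into shrinking one-dimensional nondegenerate curves, apply Theorem~\ref{thm:main-local} to each curve, and integrate over the sphere of directions. The first ingredient is a polar decomposition adapted to the tangent subspace $\cT := D\phi(s)(\R^d) \subset \R^{n+1}$: fixing a unit $e_1 \in \cT$, the surjection $g \mapsto ge_1$ from $\SO(d)$ (acting on $\cT$) onto the unit sphere of $\cT$ has fibers $\SO(d-1)$, and $(r,g) \mapsto D\phi(s)^{-1}(rge_1) \in \R^d$ gives polar-like coordinates on $\R^d$. Choosing the Haar measure on $\SO(d)$ to absorb the Jacobian of $D\phi(s)$, the volume of the $\SO(d-1)$-stabilizer, and the factor $\vol(C)^{-1}$, I rewrite the left-hand side of \eqref{eq:shrinking-equi-d} as a double integral over $(g,r) \in \SO(d) \times [0, r_g]$ of $f(a(t)\Phi(s + t^{-1}D\phi(s)^{-1}(rge_1))x)\, r^{d-1}$.

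\textbf{Pyartli twist and reduction to the curve case.} For generic $g \in \SO(d)$ the naive curve $r \mapsto \Phi(s + r D\phi(s)^{-1}(ge_1))$ is not nondegenerate when $d \geq 2$, since restricting to a straight line only sees directional derivatives of $\phi$ along $ge_1$ and these need not span $\R^{n+1}$. Following Pyartli~\cite{Pyartli69}, I twist the line by a polynomial: take $s(r,g) := s + r D\phi(s)^{-1}(ge_1) + r^2 v_2(g) + \cdots + r^n v_n(g) \in \R^d$, with $v_2(g),\ldots,v_n(g)$ determined by a triangular linear system. Nondegeneracy of $\phi$ at $s$ makes this system solvable for $g$ in a Zariski-open conull subset of $\SO(d)$, with $v_k(g)$ rational in $ge_1$. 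The twisted curve $\Phi_g(r) := \Phi(s(r,g))$ satisfies the hypothesis of Lemma~\ref{lemma:basic-identity}, and Theorem~\ref{thm:main-local} applied to $\Phi_g$ yields, for each such $g$, a subgroup $H_{s,g} \in \cH_x$ and matrices $\xi_{s,g}(\pm 1) \in G$ (rational in $g$ via the basic identity) such that the inner $r$-integral along $\Phi_g$ converges to the expected fiberwise limit.

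\textbf{Assembly and the moreover statement.} To recover a single $H_s$ independent of $g$, I take $H_s$ to be the smallest element of $\cH_x$ containing $P_{s,g}(\R)$ for all $g$ in a conull subset of $\SO(d)$; existence uses the countability of $\cH_x$ (Notation~\ref{not:cHx}) and measurability of $g \mapsto H_{s,g}$, together with a generic-maximum argument ensuring $H_{s,g} = H_s$ and $\mu_{H_{s,g}} = \mu_{H_s}$ for $g$ in a conull set. Setting $\xi_s(g) := \xi_{s,g}(+1)$ produces the rational map $\SO(d) \to G$, and dominated convergence combined with an outer Fubini over $\SO(d)$ yields \eqref{eq:shrinking-equi-d}. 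For the moreover part, $H_s \supset G$ forces $\cl{Gx} = H_sx$ and $\mu_x = \mu_{H_s}$ is $G$-invariant, so $\int_{H_sx} f(a(r)\xi_s(g)y)\,d\mu_x(y) = \int_{\cl{Gx}} f\,d\mu_x$ is constant in $(r,g)$; pulling this constant out of \eqref{eq:shrinking-equi-d} and using the polar identity from the first step (which, under the chosen normalization, makes $\int_{\SO(d)} \int_0^{r_g} r^{d-1}\,dr\,dg = 1$) gives \eqref{eq:HsG}.

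\textbf{Main obstacle.} The delicate step is compatibility of the Pyartli twist with the stretching by $a(t)$: the $O(r^2)$ correction in $s(r,g)$ induces, in the shrinking regime, an $O(t^{-2})$-perturbation in the argument of $\Phi$, which $a(t)$ naively magnifies to $O(t^{n-2})$ in the top row. Resolving this requires observing that when Lemma~\ref{lemma:basic-identity} is applied with the nilpotent matrix $B_{s,g}$ constructed from $\Phi_g$ (rather than the naive straight-line curve), the right-multiplication by $(I - tB_{s,g})$ cancels the magnified top-row contribution including the Pyartli correction, so the perturbation is absorbed into the $o(t^{-1})t$ error term of \eqref{eq:basic_identity}. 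A secondary but routine difficulty is verifying the rational dependence of $\xi_s$ on $g$ and managing the change from $H_{s,g}$ to $H_s$ uniformly; both are handled by the countability of $\cH_x$ and the polynomial nature of the construction.
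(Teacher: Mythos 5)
The broad strategy---polar fibering, a Pyartli twist reducing to nondegenerate curves, Theorem~\ref{thm:main-local} fiberwise, and a countability-plus-Zariski-density argument to extract a single $H_s$---matches the paper's, but there is a genuine gap in the polar fibering step that your ``main obstacle'' paragraph does not actually close.

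You first change variables to a $t$-\emph{independent} polar map $(g,r)\mapsto D\phi(s)^{-1}(rge_1)$, so the integrand you obtain is the \emph{straight-line} curve $f(a(t)\Phi(s+t^{-1}D\phi(s)^{-1}(rge_1))x)$. You then apply Theorem~\ref{thm:main-local} to the \emph{twisted} curve $\Phi_g(r)=\Phi(s(r,g))$, and to reconcile the two you claim that the right-multiplication by $(I-tB_{s,g})^{-1}$ in the basic identity ``cancels the magnified top-row contribution including the Pyartli correction.'' That argument is about the twisted curve only: it says $a(t)\Phi_g(t^{-1}r)(I-tB_{s,g})$ stays bounded, but says nothing about the \emph{difference} between $a(t)\Phi(s+t^{-1}D\phi(s)^{-1}(rge_1))$ and $a(t)\Phi_g(t^{-1}r)$. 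That difference in the argument of $\Phi$ is of size $O(t^{-2})$ and, after multiplying by $a(t)$, the top-row discrepancy is of order $t^{n-2}$, which diverges for $n>2$. There is no dominated-convergence or continuity argument that lets you swap these two integrands for free.

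The paper resolves this by building the twist \emph{into} the change of variable: it sets $T_t(g,r)=tg\gamma(t^{-1}r)$ with $\gamma(r)=re_1+\sum_{i=2}^d r^{n-d+i}e_i$ (see \eqref{eq:gamma-r}), so that the integrand after substitution is exactly the twisted curve $f(a(t)\Phi(\Psi(g\gamma(t^{-1}r)))x)$, while the Jacobian of $T_t$ converges to the polar weight $r^{d-1}$ because $T_t(g,r)=rge_1+O(t^{-1})$ and $\frac{|t\gamma(t^{-1}r)|^{d-1}}{r^{d-1}}\cdot\frac{d}{dr}|t\gamma(t^{-1}r)|=1+O(t^{-1}r)^2$. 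Thus the measure comparison is made at the level of the $t$-dependent change of variable and no integrand swap is ever needed. Your argument can be repaired along the same lines: instead of fixing the linear polar map and then twisting, take the change of variable $\eta=\eta_t(g,r)$ with $\eta_t(g,r)=rD\phi(s)^{-1}(ge_1)+t^{-1}r^2v_2(g)+\cdots$ (so $s+t^{-1}\eta_t(g,r)=s(t^{-1}r,g)$), and check that its Jacobian converges to $r^{d-1}$ uniformly on the domain. As written, however, the step from the straight-line polar integral to the twisted-curve inner integral is not justified.

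Two smaller remarks. First, your claim that the coefficients $v_k(g)$ are ``determined by a triangular linear system'' and ``rational in $ge_1$'' is asserted without the system being exhibited; the paper avoids solving any such system by using a fixed $\gamma$ (independent of $g$) and invoking Pyartli's Lemma~\ref{lemma:twisting}, so nondegeneracy of $\zeta_{g\gamma}$ for Zariski-generic $g$ follows from the spanning computation \eqref{eq:zeta-der}. Second, your description of the single-$H_s$ extraction is essentially the paper's Claim~1, but the crucial step---that any nonempty open subset of $\SO(d)$ (for $d\geq 2$) is Zariski dense, forcing $Z_s(H(g_0))=\SO(d)$---should be made explicit; ``measurability of $g\mapsto H_{s,g}$'' alone does not yield the conclusion.
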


\subsection*{Realizing the manifold as a graph over a tangent} Let $\cL_1$ be a subspace of $\R^{n+1}$ complementary to $\cT$; that is, $\cT\oplus \cL_1=\R^{n+1}$. Since $D\phi(0):\R^d\to \cT\subset \R^{n+1}$ has rank $d$, by the constant rank theorem, there exist open neighborhoods $\Omega_{\cT}$ of $0$ in $\cT$ and $\Omega_1$ of $s$ in $\R^d$, and a $(n+1)$-diffeomorphism $\Psi:\Omega_{\cT}\to \Omega_1$ and a $(n+1)$-times differentiable map $F:\Omega_{\cT}\to \cL_1$ such that $\Psi(0)=s$, $D\Psi(0)=D\phi(s)^{-1}$, and
\begin{equation} \label{eq:Psi}
    \phi(\Psi(\eta))=\phi(s)+\eta+F(\eta), \quad \forall \eta\in \Omega_{\cT}. 
\end{equation}
In particular, $F(0)=0$ and $DF(0)=0$.

Fix an open bounded convex neighborhood ${C}$ of $0$ in $\R^d$. Let $C_1=D\phi(s)({C})$, which is contained in $\cT$. Then for any $f\in C_c(L/\Lambda)$, 
\begin{align} 
&\lim_{t\to\infty}\frac{1}{\vol({C})}\int_{{\kappa\in C}} f(a(t)\Phi(s+t^{-1}\kappa)x)\,d\kappa, \nonumber\\
&\quad \text{changing the variable $\kappa\in\R^d$ to $\eta\in\cT$ such that 
$s+t\inv\kappa=\Psi(t\inv\eta)$,}\nonumber\\
&=\lim_{t\to\infty}\frac{1}{\vol({C})}\int_{\eta\in t\Psi\inv(s+t\inv{C})} f(a(t)\Phi(\Psi(t\inv\eta))x)\cdot\abs{\det(D\Psi(t^{-1}\eta))}\,d\eta \nonumber\\
&=\lim_{t\to\infty}\frac{1}{\vol(C_1)}
\int_{{C_1}} f(a(t)\Phi(\Psi(t^{-1}\eta))x)\,d\eta, \label{eq:B}
\end{align}
if any of the limits exist. Because since
\[
\eta=t\Psi\inv(s+t\inv \kappa)=D\Psi(0)\inv(\kappa)+O(t^{-2})t=D\phi(s)(\kappa)+O(t\inv),
\]
$\lim_{t\to\infty} \vol(t\Psi\inv (s+t\inv {C})\Delta C_1)=0$,  
and   $\vol({C})=\abs{\det(D\Psi(0))}\vol({C_1})$.

\subsection*{Nonplanar curves on the manifold via Pyartli's twisting} Since $\phi$ is projectively nonplanar at $s$, by Definition~\ref{def:non-de}, $\phi(s)\not\in\cT$ and we can pick a subspace $\cL$ of $\R^{n+1}$ containing $\phi(s)$ and of dimension $n-d+2$ such that $\cT+\cL=\R^{n+1}$ and $\dim(\cT\cap\cL)=1$, and we pick a neighborhood $\Omega_0$ of $s$ in $\R^d$, $r_1<0<r_2$, and a curve $\rho:(r_1,r_2)\to \cL$ parameterize the one-dimensional submanifold $\phi(\Omega_0)\cap \cL$ with $\rho(0)=\phi(s)$ and $\R\rho^{(1)}(0)=\cT\cap\cL$ such that the vectors $\rho(0),\rho^{(1)}(0),\ldots,\rho^{(n-d+1)}(0)$ form a basis of $\cL$. 

Let $\cL_1$ be the span of $\{\rho(0),\rho^{(2)}(0),\ldots,\rho^{(n-d+1)}(0)\}$. Then $\cT\oplus \cL_1=\R^{n+1}$, and we consider \eqref{eq:Psi} with respect to this choice of the subspace $\cL_1$. 

For any $0\neq w\in\cT$, the curve $\rho_w:I_w\to \R w+\cL_1$ given by
\begin{equation} \label{eq:rw}
\rho_w(r)=\phi(\Psi(rw)))=\phi(s)+rw+F(rw),\,\forall r\in I_w,
\end{equation}
parameterized the one-dimensional submanifold $\phi(\Omega_1)\cap (\R w+\cL_1)$, $\rho_w(0)=\phi(s)$, and $\rho_w^{(1)}(0)=w$, where $I_w=\{r\in\R: rw\in\Omega_{\cT}\}$. 

We recall that to say $\rho_w$ is projectively nonplanar at $0$ means that the set 
\[
\{\rho_w^{(i)}(0):0\leq i\leq n-d+1\}
\]
is a basis of $\R w \oplus \cL_1$; or equivalently, it consists of linearly independent vectors.

Suppose $v\in\cT\cap\cL\setminus\{0\}$. Then $\R v+\cL_1=\cL$. So $\rho_v$ and the curve $\rho$, as above, both parameterize the one-dimensional submanifold $\phi(\Omega_0\cap\Omega_1)\cap \cL$, $\rho_v(0)=\rho(0)=\phi(s)$, and $\rho_v^{(1)}(0)=v\neq 0$. Therefore $\rho_v=\rho\circ \eta$, where $\eta$ is a $C^{n}$-diffeomorphism from a neighborhood of $0$ to a neighborhood of $0$ fixing $0$. Since $\rho$ is projectively nonplanar at $0$, the same holds for $\rho_v$. So we pick $g_1\in\SOd$ such that $g_1e_1\in\cT\cap\cL$. Then the curve $\rho_{g_1e_1}$ is projectively nonplanar at $0$. 

Our goal is to `radially fiber' a neighborhood of $\phi(s)$ in $\phi(\Omega)$ by projectively nonplanar curves passing through $\phi(s)$. We will achieve this by twisting a projectively nonplanar curve like $\rho_{w}\subset \R w+\cL_1$; the added twist to $\rho_w$ is such a high degree that it does not affect the initial $(n-d+1)$-derivatives at $0$, and its higher derivatives span a subspace of $\R^{n+1}$ complementary to $\R w +\cL_1$. This twisting trick due to Pyartli is carried out below. 

Let $\gamma:\R\to \cT$ be a curve given by 
\begin{equation*} 
\gamma(r)=re_1+\sum_{i=2}^d r^{n-d+i} e_i\in \cT,\ \forall r\in\R.
\end{equation*}
For any $g\in \SOd$, define $\zeta_{g\gamma}:(-r_0,r_0)\to \R^{n+1}$ by
\begin{align}
     \zeta_{g\gamma}(r)=\phi(\Psi(g\gamma(r))) =\phi(s)+g\gamma(r)+ F(g\gamma(r)).
    \label{eq:Pyartli}
\end{align}
The next lemma shows that $\zeta_{g\gamma}$ is a desired twist of the curve $\rho_{ge_1}$.

\begin{remark} \label{rem:poly}
Let $0\leq k\leq n$. It is straightforward to verify that $g\mapsto (F\circ(g\gamma))^{k}(0)$ is a polynomial map of degree at most $k$ in coordinates of $g$. Hence $g\mapsto \zeta_{g\gamma}^{(k)}(0)$ is a polynomial map in coordinates of $g$.
\end{remark}

\begin{lemma}[{\cite[Lemma~5]{Pyartli69}}] \label{lemma:twisting}
Let $g\in\SOd$ be such that the curve $\rho_{ge_1}$, as defined in \eqref{eq:rw}, is projectively nonplanar at $0$. Then the map $\zeta_{g\gamma}$, defined by \eqref{eq:Pyartli},
is projectively nonplanar at $0$.
\end{lemma}

\begin{proof} It is straightforward to verify that
\begin{align}
    \zeta_{g\gamma}^{(k)}(0)&=\rho^{(k)}_{ge_1}(0)\in \R ge_1\oplus \cL_1 \text{, for } 0\leq k\leq (n-d+1), \nonumber\\
    \zeta_{g\gamma}^{(n-d+i)}(0)&=(n-d+i)!\cdot ge_i+(F\circ(g\gamma))^{(n-d+i)}(0)
    \nonumber 
    \\
    &=(n-d+i)!\cdot ge_i \in\cT \text{ modulo }\cL_1 \text{, for } 2\leq i\leq d. \label{eq:zeta-der}
\end{align}
Therefore the $\R$-span of $\{\zeta_{g\gamma}^{(k)}(0):0\leq k\leq n\}$ equals 
$\cL_1\oplus \sum_{i=1}^d\R ge_i=\R^{n+1}$.
\end{proof}

\subsection*{Polar fibering}
For $t\geq 1$, let $T_t:\SOd\times [0,\infty)\to \cT$ be given by 
\begin{equation} \label{eq:gamma-r}
    T_t(g,r)=tg\gamma(t\inv r)=g\cdot t\gamma(t\inv r)=g\cdot(re_1+\sum_{i=2}^d t^{-(n-d+i-1)} r^{n-d+i} e_i).
\end{equation}
We recall that $2\leq d\leq n$. Let $dg$ denote a Haar integral on $\SOd$. For a fixed $r>0$, under the map $g\mapsto T_t(g,r)$, the Haar measure on $\SOd$ projects to a rotation invariant measure on the sphere of radius $\norm{t\gamma(t\inv r)}$ in $\cT$ centered at $0$, where $\norm{\cdot}$ denotes the norm with respect to the chosen inner product on $\cT$. Then the image of the integral $\mathrm{d}g\times \norm{t\gamma(t\inv r)}^{d-1}\mathrm{d}\norm{t\gamma(t\inv r)}$ under the map $T_t$ equals to a nonzero multiple of the Lebesgue integral on $\cT$.

Let 
$r_{g,t}=\sup\{r\geq 0: T_t(g,r)\in C_1\}$.
Now $T_t(g,r)=rge_1+O(t\inv)$ uniformly in $g$ and bounded $r$. Therefore $r_{g,t}=r_g+O(t\inv)$, where
\[
r_g=\sup\{r\geq 0:rge_1\in C_1\}. 
\]
By \eqref{eq:gamma-r},
$
\frac{\norm{t\gamma(t\inv r)}^{d-1}}{r^{d-1}}\cdot \frac{d}{dr}\norm{t\gamma(t\inv r)}=1+O(t\inv r)^2.
$
Therefore continuing \eqref{eq:B}, by the change of variable $\eta=T_t(g,r)$,

\begin{align}
    &\lim_{t\to\infty}\frac{1}{\vol(C_1)}\int_{{C_1}} f(a(t)\Phi(\Psi(t^{-1}\eta))x))\,d\eta \nonumber\\
    =&\lim_{t\to\infty} \int\limits_{g\in \SOd}
    \left[
    \int\limits_{0}^{r_{g,t}} 
    f(a(t)\Phi(\Psi(t\inv T_t(g,r)))x) \cdot \norm{t\gamma(t\inv r)}^{d-1}\,d(\norm{t\gamma(t\inv r)})
    \right]
    dg \nonumber \\
    =&\lim_{t\to\infty} \int\limits_{g\in \SOd}
    \left[
    \int\limits_{0}^{r_g} 
    f(a(t)\Phi(\Psi(g\gamma(t\inv r)))x)r^{d-1}dr
    \right]
    dg,
    \label{eq:T}
\end{align}

where for each $t$ the Haar integral $dg$ on $\SOd$ is normalized such that the integral of the expression equals $1$ for the constant function $f=1$.

\subsection*{Proof of Theorem~\ref{thm:d-shrinking}} 
In view of \eqref{eq:Pyartli} and \eqref{eq:T},
\begin{equation} \label{eq:zeta}
J_0\Phi(\Psi(g\gamma(r)))=\zeta_{g\gamma}(r)\in\R^{n+1}.
\end{equation}

Let $\{\te_k:0\leq k\leq n\}$ denote the standard basis of $\R^{n+1}$ consisting of row vectors. For $g\in \SOd$, let $M{}(g)\in\M(n+1,\R)$ be such that with respect to the right action $\R^{n+1}$, 
\begin{equation*} 
\te_k M{}(g)=\zeta_{g\gamma}^{(k)}(0)/k!, \ \forall\, 0\leq k\leq n.
\end{equation*}

By Remark~\ref{rem:poly}, the map $g\mapsto M(g)$ is a polynomial function in coordinates of $g$. Define
\[
Z_s:=\{g\in\SOd:\det(M(g))=0\}.
\]
Then $Z_s$ is an algebraic subvariety of $\SOd$. For $g\in \SOd$, $\zeta_{g\gamma}$ is projectively nonplanar at $0$ if and only if $g\not\in Z_s$.  

Pick $g_1\in\SOd$ such that $g_1e_1\in \cL\cap\cT\setminus\{0\}$. As observed earlier, the curve $\rho_{g_1e_1}$ is projectively nonplanar at $0$. Therefore by Lemma~\ref{lemma:twisting}, $\zeta_{g_1\gamma}$ is projectively nonplanar at $0$. Hence $g_1\not\in Z_s$. Therefore $Z_s$ is a proper algebraic subvariety of $\SOd$ and $\dim(\cT)=d\geq 2$. Hence $Z_s$ is Haar-null on $\SOd$.

Let $B$ denote the lower triangular matrix such that $\te_0B=0$ and $\te_kB=\te_{k-1}$ for $1\leq i\leq n$. Let $g\in \SOd\setminus Z_s$. Set $B{}(g)=M{}(g)\inv B M{}(g)$. Then 
\[
\zeta_{g\gamma}(0)B{}(g)=0 \text{ and } (\zeta^{(k)}_{g\gamma}(0)/k!)B{}(g)=\zeta^{(k-1)}_{g\gamma}(0)/(k-1)!,
\ \forall 1\leq k\leq n,
\]
as in \eqref{eq:Bs}. In view of \eqref{eq:xi}, let
\begin{equation*} 
\xi_s(g)=J_0(\zeta_{g\gamma}^{(n)}(0)/{n!}  - \zeta_{g\gamma}^{(n+1)}(0)/(n+1)!\cdot B{}(g)) - J_n\Phi(s)B{}(g).
\end{equation*}
Then by \eqref{eq:zeta} and \eqref{eq:basic_identity},
\begin{equation} \label{eq:basic-3}
a(t)\Phi(\Psi(g\gamma(t\inv)))
=(I+o(t\inv)t)\xi_s(g)(I-tB{}(g))^{-1}.
\end{equation}
In particular, $\xi_s(g)\in G$. As in  \eqref{eq:Ps}, 
\begin{equation} \label{eq:inv}
(I-tB{}(g))^{-1}=I+\sum_{k=1}^n t^k B{}(g)^k,\  \forall\, t\in\R.
\end{equation}

Let $\lief{}(g)$ be the $\R$-span of $\{B{}(g)^k:1\leq k\leq n\}$. Then one has
\begin{equation} \label{eq:fg}
    \lief{}(g)=M{}(g)\inv\lief M{}(g),
\end{equation}
where $\lief$ is the $\R$-span of $\{B^k:1\leq k\leq n\}$.

We fix $x\in L/\Lambda$. Let $H(g)\in\cH_x$ be the smallest Lie subgroup such that its Lie algebra contains $\lief(g)$. By Theorem~\ref{thm:main-local}, Remark~\ref{rem:Hs-Bs}, \eqref{eq:T}, \eqref{eq:basic-3}, and \eqref{eq:inv},
\begin{align}
&\lim_{t\to\infty} \int_{0}^{r_g} f(a(t)\Phi(\Psi(g\gamma(t\inv r)))x)r^{d-1}\,dr \nonumber\\
& =\int_0^{r_g} \Big[\int_{H(g)x} f(a(r)\xi_{s}(g)y)\,d\mu_{H(g)}(y)\Bigr]r^{d-1}dr. \label{eq:main-local-g}
\end{align}

\subsubsection*{Claim~1.} {\em There exists $g_0\in \SOd\setminus Z_s$ such that $H(g)\subset H(g_0)$,  $\forall g\in \SOd\setminus Z_s$.}

\subsubsection*{Proof of Claim~1.}
For any $H\in\cH_x$ and $g\in \SOd\setminus Z_s$, 
we have $H(g)\in\cH_x$, and 
\begin{align} 
H(g)\subset H\iff \lief(g)\subset\Lie(H) 
\iff M(g)\inv \lief M{}(g)\subset \Lie(H). \label{eq:Hs-2}
\end{align}

For any $H\in\cH_x$, we define
\[
Z_s(H)=\{g\in\SOd: \lief M(g)\subset M(g)\Lie(H)\}.
\]
Then by \eqref{eq:Hs-2},
\begin{equation} \label{eq:ZsH}
Z_s(H)\setminus Z_s=\{g\in \SOd\setminus Z_s: H(g)\subset H\}. 
\end{equation}
Since $g\mapsto M(g)$ is a polynomial map, $Z_s(H)$ is a Zariski closed subset of $\SOd$. For every $g\in\SOd\setminus Z_s$, we have $g\in Z_s(H(g))$. Since $\cH_x$ is countable, $\SOd\setminus Z_s$ is covered by a countable union of closed subsets $Z_s(H(g))$, where $g\in \SOd\setminus Z_s$. Since $\SOd\setminus Z_s$ is locally compact, it is of Baire's second category. So there exists $g_0\in\SOd\setminus Z_s$ such that $Z_s(H(g_0))$ contains a non-empty open subset of $\SOd$. Since $\dim(\cT)=d\geq 2$, any non-empty open subset of $\SOd$ is Zariski dense in $\SOd$. Therefore $Z_s(H(g_0))=\SOd$. So the Claim~1 follows from \eqref{eq:ZsH}.

\subsubsection*{Claim~2.} {\em Pick $g_0\in \SOd\setminus Z_s$, and define
\[
Z_{s,g_0}=\{g\in\SOd\setminus Z_s:\,H(g_0)\not\subset H(g)\}.
\] 
Then $Z_{s,g_0}$ is Haar-null on $\SOd$.}

\subsubsection*{Proof of Claim 2.}
Let $g\in Z_{s,g_0}$. Then by \eqref{eq:ZsH}, $g_0\not\in Z_s(H({g}))$ and $g\in Z_s(H({g}))$. So $Z_s(H({g}))$ is an algebraic subvariety of $\SOd$ of strictly lower dimension. Now $H(g)\in\cH_x$ and $\cH_x$ is countable. Therefore $Z_{s,g_0}$ is contained in a countable union of proper algebraic subvarieties of $\SOd$. Hence $Z_{s,g_0}$ is Haar-null on $\SOd$, proving Claim~2.

\bigskip
Now pick $g_0$ as in Claim~1. Put $H_s=H(g_0)$. By Claim~1, and the definition of $Z_{s,g_0}$,  we have $H(g)=H_s$ for all $g\in \SOd \setminus (Z_s\cup Z_{s,g_0})$. Continuing \eqref{eq:T}, using \eqref{eq:main-local-g},  since $Z_s\cup Z_{s,g_0}$ is Haar-null by Claim~2,  
\begin{align}
&\lim_{t\to\infty} \int_{g\in\SOd} 
    \left[
    \int_{0}^{r_g} 
    f((a(t)\Phi(\Psi(g\gamma(t\inv r))))x)r^{d-1}\,dr
    \right]
    dg \nonumber \\
    =&
    \int_{g\in \SOd\setminus (Z_{s,g_0}\cup Z_s)}
    \int_{0}^{r_g} \left[\int_{y\in H_sx} f(a(r)\xi_s(g)y)\,d\mu_{H_s}\right] r^{d-1}\, dr\,dg.
\end{align}
This completes the proof of Theorem~\ref{thm:d-shrinking}. \qed

\section{Equidistribution of translates of nonplanar manifolds}
\label{sec:Ex}
Let the notation be as in the statement of Theorem~\ref{thm:main-manifold}. In view of \eqref{eq:HsG} in Theorem~\ref{thm:d-shrinking}, we define the exceptional set
\begin{equation} \label{eq:def:Ex}
E_x=\{s\in \Omega: G\not\subset H_s\}.
\end{equation}

To describe $E_x$, we will use a crucial result from \cite{Shah:Dirichlet}, which is generalized in \cite{Yang2020} for arbitrary $G$. 
We begin with some notation and observations. Let
\begin{align*}
U&=\{u(\mbz):=\bigl(\begin{smallmatrix} 1 & \mbz\\ 0 & J_n\end{smallmatrix}\bigr): \mbz\in\R^n\}=\{g\in G: \lim_{t\to\infty} a(t)^{-1} g a(t)=I\}\\
U^-&=\{g\in G: \lim_{t\to\infty} a(t) g a(t)^{-1}=I\}\\
P^-&=\{g\in G:\cl{\{a(t)ga(t)\inv:t\geq 1\}}\text{ is compact}\}=Z_G(\{a(t):t>0\})U^-.
\end{align*}

Let $\{\te_{k}:0\leq k\leq n\}$ denote the standard basis of $\R^{n+1}$, which is identified with $J_0\M(n+1,\R)$, the space of top rows of matrices in $\M(n+1,\R)$. We identify $\R^n$ with $\Span\{\te_{k}:1\leq k\leq n\}$. 
Then $P^-$ is the stabilizer of the line $\R\cdot \te_0$ for the right action of $G$ on $\R^{n+1}$ and $\te_0 u(\mbz)=\te_{0}+\mbz\in\R^{n+1}$, $\forall\mbz
\in\R^{n}$. Therefore
\begin{equation} \label{eq:P-U}
P^-U=\{g\in G: g_{00}:=\langle \te_0g,\te_0\rangle\neq 0\}=\{g\in G:J_0g\notin \{0\}\times\R^n\},
\end{equation}
and it is a Zariski open dense neighborhood of the identity in $G$. 

For a finite dimensional representation $V$ of $G$, define
\begin{gather*}
    V^{+}=\{v\in V:\lim_{t\to\infty} a(t)\inv v=0\}, \quad V^{-}=\{v\in V:\lim_{t\to\infty} a(t) v=0\},\\
    V^0=\{v\in V: a(t)v=v,\ \forall t>0\}.
\end{gather*}
Then $V=V^+\oplus V^0\oplus V^-$, and let $\pi_+$, $\pi_0$, and $\pi_-$ denote the corresponding projections from $V$ onto $V^+$, $V^0$ and $V^-$, respectively. 

\begin{proposition} 
[{\cite[Corollary~4.4]{Shah:Dirichlet}}] \label{prop:basic}   Let $\cE\subset P^{-}U$ be such that $J_{0}\cE$ is not contained in the union of any $n$ proper subspaces of $\R^{n+1}$. Then for any finite dimensional representation $V$ of $G$ and a nonzero $v\in V$, if
\begin{equation} \label{eq:cE}
    gv\in V^0+V^-, \quad \forall g\in\cE,
\end{equation}
then $\pi_0(gv)\neq 0$ for all $g\in\cE$ and $Z_G(\{a(t):t>0\})$ fixes $\pi_0(gv)$.

\end{proposition}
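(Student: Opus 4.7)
The plan is to reduce to the case $\cE\subset U$ and then exploit the polynomial nature of the orbit map $\mbz\mapsto u(\mbz)v$ combined with the non-degeneracy hypothesis on $I_0\cE$. First, I would factor each $g\in\cE\subset P^-U$ uniquely as $g=mu^-u(\mbz)$ with $m\in M:=Z_G(\{a(t):t>0\})$, $u^-\in U^-$ (the opposite unipotent inside $P^-$) and $u(\mbz)\in U$. A direct weight count shows that $\rho(\lief^-)$ sends $V^0$ into $V^-$ and preserves $V^-$, so $\pi_0(u^-w)=\pi_0(w)$ for every $w\in V^0+V^-$; since $M$ commutes with $\pi_0$, this yields $\pi_0(gv)=m\cdot\pi_0(u(\mbz)v)$. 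Hence non-vanishing and $M$-fixedness of $\pi_0(gv)$ are each equivalent to the corresponding statements for $u(\mbz)v$. Because $I_0 g=g_{00}(\te_0+\mbz)$ and the subspace condition is invariant under positive scaling, the hypothesis on $I_0\cE$ becomes: writing $S=\{\mbz(g):g\in\cE\}\subset\R^n$, the set $\{\te_0+\mbz:\mbz\in S\}\subset\R^{n+1}$ is not contained in any union of $n$ proper subspaces.

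Second, I would introduce the polynomial map $F:\R^n\to V^+$ defined by $F(\mbz)=\pi_+(u(\mbz)v)=\pi_+\bigl(\sum_{k\geq 0}\tfrac{1}{k!}\rho(X(\mbz))^k v\bigr)$, where $X(\mbz)=\sum z_iX_i$ with $X_i=E_{0,i}\in\lief^+$, and after choosing an $M$-equivariant splitting $V^0=V^M\oplus W^0$, the polynomial map $Q:\R^n\to W^0$, $Q(\mbz)=\mathrm{pr}_{W^0}\bigl(\pi_0(u(\mbz)v)\bigr)$. The hypothesis gives $F|_S\equiv 0$, while the two conclusions of the proposition amount to $Q|_S\equiv 0$ and $\pi_0(u(\mbz)v)|_S\neq 0$. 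The plan is to upgrade $F|_S=0$ and $Q|_S=0$ to identities on all of $\R^n$, and then to derive the non-vanishing by a translation argument.

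The main obstacle is the upgrade from vanishing on $S$ to vanishing on $\R^n$: this is not a formal Zariski-density step, since $F$ can have degree much larger than $n$ in high-weight representations. Instead, I would exploit that $\rho(X_i)$ raises the $a$-weight by $n+1$, so the polynomials $F$ and $Q$ admit very constrained weight-graded decompositions: the component of $u(\mbz)v$ at weight $\mu$ is a sum $\sum_k\tfrac{1}{k!}\rho(X(\mbz))^k v_{\mu-k(n+1)}$ whose $k$-th term is a homogeneous polynomial of degree $k$ in $\mbz$. Working one weight at a time and peeling off one power of $\rho(X(\mbz))$ at a time, each surviving obstruction to vanishing should contribute a single linear condition on $\te_0+\mbz$, producing at most $n$ proper subspaces of $\R^{n+1}$ that would contain $\{\te_0+\mbz:\mbz\in S\}$. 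The subspace hypothesis then forces $F\equiv 0$ and $Q\equiv 0$ on $\R^n$.

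Having established $F\equiv 0$ and $Q\equiv 0$ on $\R^n$, the $M$-fixedness conclusion is immediate: $\pi_0(u(\mbz)v)\in V^M$ for every $\mbz\in\R^n$. For non-vanishing, suppose $\pi_0(u(\mbz_0)v)=0$ for some $\mbz_0\in S$; then $u(\mbz_0)v\in V^-$, and translating $\cE$ by $u(\mbz_0)^{-1}$ (a linear automorphism of $\R^{n+1}$ that preserves the subspace hypothesis) reduces to the case $v\in V^-$. Inspection of the linear-in-$\mbz$ coefficient of the now identically-zero $F$ then forces $\rho(\lief^+)v_\lambda=0$ for every weight component $v_\lambda$ of $v$ with $-n\leq\lambda<0$, and iterating the analysis at successively higher orders eventually forces $v=0$, contradicting $v\neq 0$.
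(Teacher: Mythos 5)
Your proposal attempts something substantially different from the paper. The paper's proof is a short reduction to the cited \cite[Corollary~4.4]{Shah:Dirichlet}: it factors $g\in P^{-}U$ as $bu(\bar g)$ with $b\in P^-$, observes that $P^-$ stabilizes $V^0+V^-$ so that the hypothesis descends to $u(\bar g)v\in V^0+V^-$, checks (``Claim~1'') that the hypothesis on $I_0\cE$ translates into the hypothesis of the cited corollary --- namely that the differences $\{\bar g-\bar h\}$ are not trapped in a union of $n$ proper subspaces of $\R^n$ --- and then transfers the conclusion back using $\pi_0(bw)=\lambda\pi_0(w)$ for $b\in P^-$. Your proposal, by contrast, tries to re-derive the content of Corollary~4.4 itself from scratch via a weight-graded analysis of the polynomial map $\mbz\mapsto u(\mbz)v$. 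The first reduction step in your argument (passing from $\cE\subset P^-U$ to $\cE\subset U$ via the Iwasawa-type factorization and the fact that $\pi_0\circ u^-=\pi_0$ on $V^0+V^-$) is correct and is essentially what the paper does.

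However there are two genuine gaps. First, the central step --- upgrading $F|_S\equiv 0$ to $F\equiv 0$ on $\R^n$ under the hypothesis that $\{\te_0+\mbz:\mbz\in S\}$ avoids every union of $n$ proper subspaces --- is asserted as a plan (``each surviving obstruction to vanishing should contribute a single linear condition'') rather than proven. There is no reason a priori that the zero locus of a component of $F$, which is a polynomial of degree up to $k$ in each weight stratum, should lie in a single hyperplane; bounding the number of linear conditions by $n$ is precisely the content of the cited corollary, and the weight bookkeeping required to show it is not carried out. Second, the logic around $Q$ is not sound: you write that ``the hypothesis gives $F|_S\equiv 0$'' and then propose to ``upgrade $F|_S=0$ and $Q|_S=0$'' to global vanishing, but $Q|_S=0$ is \emph{not} a hypothesis --- it is (part of) the conclusion to be proven, so you cannot start from it. A correct version of your approach would need to first establish $F\equiv 0$ on $\R^n$ and then \emph{separately} argue that this global invariance forces $\pi_0(u(\mbz)v)$ to be $M$-fixed; that second implication is not addressed. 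Given that the paper delegates exactly this hard analytic-algebraic step to a citation, and you have only sketched it, the proposal as written does not constitute a proof.
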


\begin{proof}
For every $g\in P^{-}U$, there exists a unique $\bar g\in\R^{n}$ 
such that $P^{-}g=P^{-}u(\bar g)$, and $J_{0}g=g_{00}(\tilde e_0+\bar g)$. 
Suppose that $v\in V$ is such that \eqref{eq:cE} holds.
Since $P^{-}$ stabilizes $V^{0}+V^{-}$,
\begin{equation} \label{eq:bar_cE}
u({\bar g}) v\subset V^{0}+V^{-}, \quad \forall g\in \cE.
\end{equation} 

\subsubsection*{Claim~1}  {\em Let $h\in\cE$. Then for any proper subspaces $W_{k}$ of $\R^{n}$ for $1\leq k\leq n$,  
\[
\{\bar g - \bar h: g\in\cE\}\not\subset \cup_{k=1}^n W_{k}.
\]}

To prove this by contradiction, suppose $\{\bar g-\bar h:g\in \cE\}\subset \cup_{k=1}^n W_{k}$. For every $g\in P^{-}U$, $\bar g-\bar h=g_{00}\inv J_{0}g-h_{00}\inv J_{0}h$. Therefore $J_{0}\cE\subset \cup_{k=1}^n (W_{k}\oplus \R J_{0}h)$, which contradicts the choice of $\cE$, as $W_{k}\oplus \R J_{0}h$ is a proper subspace of $\R^{n+1}$ for each $k$.

\bigskip
Let $h\in\cE$. By \cite[Corollary~4.4]{Shah:Dirichlet}, \eqref{eq:bar_cE}, and Claim~1,  $\pi_{0}(u(\bar h)v)\neq 0$ and it is fixed by $Z_{G}(\{a(t):t>0\})$. 
Now $h=bu(\bar h)$ for some $b\in P^{-}$. So $b=zu^-$ for some $z\in Z_G(\{a(t):t>0\})$ and $u^-\in U^-$. Then for any $w\in V$, we have $\pi_0(zw)=z\pi_0(w)$, and for any $w\in V^0+V^-$, we have $u^-w\in V^0+V^-$ and $\pi_0(u^-w)=\pi_0(w)$. Since $h=zu^-u(\bar h)$ and $u(\bar h)v\in V^0+V^-$, we  have 
\[
\pi_0(hv)=z\pi_0(u^{-}\cdot u(\bar h)v)=z\pi_0(u(\bar h)v), 
\]
which is nonzero and fixed by $Z_G(\{a(t):t>0\})$.
\end{proof} 

\begin{proposition} \label{prop:E-H}
Let $H\in \cH_x$ be such that $G\not\subset H$. Let
\begin{equation} \label{eq:def-E_H}
E_H=\{s\in \Omega:\Phi(s)\in P^-U \text{ and } \,H_s\subset H\}.
\end{equation}
Then $\phi(E_H)$ is contained in a union of countably many proper subspaces of $\R^{n+1}$; we recall that $\phi(s)=J_0\Phi(s)$, $\forall s\in\Omega$. 
\end{proposition}

\begin{proof}
Let $F$ be the closure of the subgroup of $G$ generated by all unipotent elements of $G$ contained in $H$. Then $F\neq G$. Since $H$ is a connected Lie group, $F$ is a real algebraic subgroup of $G$. Since $F$ admits no nontrivial algebraic characters, we choose a finite-dimensional representation $V$ of $G$ with a vector $p_F\in V$ such that $F$ fixes $p_F$ and $V$ has no nonzero $G$-fixed vector. 

\subsubsection*{Claim 2}  {\em For any $s\in E_H$, 
    $\Phi(s)p_F\in V^0+V^-$.}

\bigskip
To see this, for any  $g\in \SOd$, since $\gamma(0)=0$ and $\Psi(0)=s$, we have
\begin{equation} \label{eq:pi+}
\lim_{t\to\infty} \pi_+(\Phi(\Psi(g\gamma(t\inv)))p_F)= \pi_+(\Phi(s)p_F).
\end{equation}
Now let $s\in E_H$ and $g\in\SOd\setminus (Z_{s}\cup Z_{s,g_0})$. By \eqref{eq:basic-3}, 
\begin{align}
a(t)\Phi(\Psi(g\gamma(t\inv)))p_F 
&=(I+o(t^{-1})t)\xi_s(g)(I-tB_s(g))\inv p_F \nonumber\\
&=(I+o(t^{-1})t)\xi_s(g)p_F, \label{eq:lim_T1}
\end{align}
because by \eqref{eq:inv}, \eqref{eq:fg} and \eqref{eq:Hs-2}, $(I-tB_s(g))\inv$ is a unipotent element of $G$ contained in $H(g)=H_s\subset H$, so it fixes $p_F$. Therefore from \eqref{eq:pi+} we conclude that $\pi_+(\Phi(s)p_F)=0$, otherwise the left hand side of \eqref{eq:lim_T1} diverges as $t\to\infty$. So Claim~2 holds. 

\subsubsection*{Claim 3} {\em For any $s\in E_H$, if there exists a sequence $\{s_i\}\subset E_H$ such that $s_i\to s$ and $\phi(s_i)\not\in \R \phi(s)$, $\forall\,i$, then  $\pi_0(\Phi(s)p_F)$ is fixed by $u(\mbz)\in U$ for some $\mbz\in\R^n\setminus\{0\}$.} 

\bigskip
Since $\phi(s_i)=J_0\Phi(s_i)$ and $J_0P^-\Phi(s)\subset\R \phi(s)$, we have
$\Phi(s_i)\not\in P^-\Phi(s)$ for all $i$. 
 Therefore  $\Phi(s_i)\Phi(s)^{-1}=b_iu(\mbz_i)$, where $b_i\to I$ in $P^-$ and $0\neq \mbz_i\to 0$ as $i\to\infty$. Let $t_i=\norm{\mbz_i}^{-1/(n+1)}$. After passing to a subsequence, as $i\to\infty$,
\begin{equation} \label{eq:u}
 a(t_i)\Phi(s_i)\Phi(s)^{-1}a(t_i)^{-1}=a(t_i)b_ia(t_i^{-1})\cdot u(\mbz_i/\norm{\mbz_i})\to u(\mbz), 
\end{equation}
for some $\mbz\in \R^n\setminus\{0\}$.
By Claim~2, $\Phi(s_i)p_F\in V^0+V^-$, so
\begin{equation*}
    \lim_{i\to\infty} a(t_i)\Phi(s_i)p_F=\lim_{i\to\infty} \pi_0(\Phi(s_i)p_F)=\pi_0(\Phi(s)p_F).
\end{equation*}
On the other hand by \eqref{eq:u}, as $i\to\infty$,
\begin{align*}
a(t_i)\Phi(s_i)p_F
=a(t_i)\Phi(s_i)\Phi(s)^{-1}a(t_i)\inv\cdot a(t_i)\Phi(s)p_F \nonumber
\to u(\mbz)\cdot \pi_0(\Phi(s)p_F). 
\end{align*}
Therefore $\pi_0(\Phi(s)p_F)$ is fixed by $u(\mbz)$. This proves Claim~3.

\subsubsection*{Claim 4} {\em If $\phi(E_H)$ is not contained in the union of any $n$ proper subspaces of $\R^{n+1}$, 
\begin{equation} \label{eq:stab}
\text{
 $\forall s\in E_H$, we have $\pi_0(\Phi(s)p_F)\neq 0$, and it is fixed by $Z_G(\{a(t):t>0\})$.}
\end{equation}}

This follows from Proposition~\ref{prop:basic} applied to $\cE=\Phi(E_H)$ and Claim~2.

\subsubsection*{Claim 5} {\em $\phi(E_H)$ is contained in the union of $n$ proper subspaces of $\R^{n+1}$, or for each $s\in E_H$, there exists a neighbourhood $\Omega_2$ of $s$ such that $\phi(E_H\cap\Omega_2)\subset \R\phi(s)$.}

\medskip
Suppose the claim fails to hold. Then the condition of Claim~4 holds, and we can pick some $s\in E_H$ such that the condition of Claim~3 holds for $s$. Then $\pi_0(\Phi(s)p_F)\neq 0$, and it is fixed by the subgroup generated by $u(\mbz)$ and $Z_G(\{a(t):t>0\})$. Since every nontrivial element of $U$ is conjugated to $u(\mbz)$ by an element of $Z_G(\{a(t):t>0\})$, we have that $\pi_0(\Phi(s)p_F)$ is fixed by $Z_G(\{a(t):t>0\})U$, which is a parabolic subgroup of $G$. So $0\neq \pi_0(\Phi(s)p_F)$ is fixed by $G$, a contradiction to our choice of $V$. This proves Claim~5, which implies the conclusion of the proposition.
\end{proof}

We need the following observation to obtain further information on the set $E_H$.

\begin{proposition} \label{prop:null} 
Let $\Omega\subset\R^d$ be an open set, and $\phi:\Omega\to\R^{n+1}$ be a $C^{l}$-map for some $l\geq 1$, such that for each $x\in\Omega$, the vectors $\phi(x)$ and $\partial_{i_k}\cdots\partial_{i_1}\phi(x)$ for all $i_j\in\{1,\ldots,d\}$ and $1\leq k\leq l$ span $\R^{n+1}$. Then for any proper subspace $W$ of $\R^{n+1}$, the set $\phi^{-1}(W)$ is contained in a union of countably many $C^1$-submanifolds of dimension $d-1$.
\end{proposition}

We will prove this using the following straightforward consequence of the implicit function theorem.

\begin{lemma}
\label{lema:implicit}
Let $\Omega\subset\R^d$ be open, and $\psi:\Omega\to \R$ be a $C^1$-map. Let 
\[
Y=\{x\in \Omega:\psi(x)=0,\,\exists i\in\{1,\ldots,d\},\, \partial_i\psi(x)\neq0\}.
\]
Then $Y$ is a $C^1$-submanifold of $\Omega$ of dimension $d-1$.
\end{lemma}

\begin{proof}
Let $y\in Y$. Pick $i\in \{1,\ldots,d\}$ such that $\partial_i\psi(y)\neq 0$. Without loss of generality, by permuting the coordinates, we assume that $i=d$. Let $(a,b)\in \R^{d-1}\times \R$ be such that $y=(a,b)$. Then by implicit function theorem there exist neighborhoods $V$ of $a$ in $\R^{d-1}$ and $W$ of $b$ in $\R$, and a $C^1$-map $f:V\to W$ such that 
\[
Y\cap (V\times W)=\{(v,w)\in V\times W: \psi((v,w))=0\}=\{(v,f(v)):v\in V\},
\]
which is a submanifold of $\R^d$ of dimension $d-1$.
\end{proof}

\begin{proof}[Proof of Proposition~\ref{prop:null}] 
Let $\ell$ be a nonzero linear functional on $\R^{n+1}$ such that $\ell(W)=0$. Let $\psi=\ell\circ \phi$. 
Then $\phi^{-1}(W)\subset X_0:=\{x\in \Omega:\psi(x)=0\}$. 

For $1\leq k\leq l$, let
\begin{align*}
X_k&=\{x\in X_0:\forall 1\leq j\leq k,\,\forall i_1,\ldots,i_j\in\{1,\ldots,d\},\,\partial_{i_j}\cdots\partial_{i_1}\psi(x)=0\}.
\end{align*}

Note that $\partial_{i_j}\cdots\partial_{i_1}\psi=\ell\circ\partial_{i_j}\cdots\partial_{i_1}\phi$. Since $\ker\ell$ is a proper subspace of $\R^{n+1}$, by our assumption on $\phi$ we conclude that $X_{l}=\emptyset$. Therefore,
\begin{equation} \label{eq:X0}
X_0=\bigcup_{k=1}^l X_{k-1}\setminus X_k.
\end{equation}

Let $1\leq k\leq l$, and $y\in Y_k:=X_{k-1}\setminus X_k$. Since $y\not\in X_{k}$, we pick $i_1,\ldots,i_{k}\in\{1,\ldots,d\}$ such that $\partial_{i_{k}}\cdots\partial_{i_1}\psi(z)\neq 0$ for all $z$ in an open set $\Omega_1$ containing $y$. Let $\psi_1:=\partial_{i_{(k-1)}}\cdots\partial_{i_1}\psi:\Omega_1\to\R$. Then $\psi_1$ is a $C^1$-map. 
For any $x\in Y_k\cap \Omega_1\subset X_{k-1}$, we have $\psi_1(x)=0$. Therefore by Lemma~\ref{lema:implicit} applied to $\psi_1$, the neighborhood $Y_k\cap \Omega_1$ of $y$ in $Y_k$ is contained in a $C^1$-submanifold of dimension $d-1$. Since $Y_k$ is covered by countably many such neighborhoods, the conclusion of the proposition follows from \eqref{eq:X0}. 
\end{proof}

\begin{proof}[Proof of Theorem~\ref{thm:main-manifold}] Let $s\in \Omega\setminus\phi\inv(\{0\}\times\R^n)$. Then $J_0\Phi(s)=\phi(s)\not\in \{0\}\times \R^n$. So by \eqref{eq:P-U}, $\Phi(s)\in P^-U$. Now suppose $s\in E_x\setminus\phi\inv(\{0\}\times\R^n)$. Then $\Phi(s)\in P^-U$ and $G\not\subset H_s$ by \eqref{eq:def:Ex}. Also $H_s\in\cH_x$. So $s\in E_{H_s}$ by \eqref{eq:def-E_H}. Therefore
\begin{equation*} 
E_x\subset \phi\inv(\{0\}\times\R^n)\bigcup \cup\{E_H:H\in\cH_x \text{ and } H\not\supset G\}.
\end{equation*}
Since $\cH_x$ is countable,  by Proposition~\ref{prop:E-H}, $E_x$ is contained in a countable union of sets of the form $\phi\inv(W)$, where $W$ is a proper linear subspace of $\R^{n+1}$. Due to Lemma~\ref{lemma:twisting}, $\phi$ satisfies the condition of Proposition~\ref{prop:null} for $\ell=n$, and hence $E_x$ is contained in a countable union of $C^1$-submanifolds of dimension $d-1$, and in particular, its measure is zero. Let $s\in\Omega\setminus E_x$. Then $H_s\supset G$. Therefore by Theorem~\ref{thm:d-shrinking}, we get  \eqref{eq:HsG}, which is the same as  \eqref{eq:local-limit}. Now \eqref{eq:global-limit} can be deduced from \eqref{eq:local-limit} using the Lebesgue points of $\nu$ and convergence in measure.
\end{proof}

Next, we show that the exceptional set is dense in some standard examples.

\begin{proposition} \label{prop:E-dense}
Let $L=G=\SL(n+1,\R)$ and $\Lambda=\SL(n+1,\Z)$. Let $\phi:\R\to \R^{n+1}$ be a polynomial map with coefficients in $\Q$ such that its image is not contained in a proper subspace of $\R^{n+1}$. Then $\phi$ is projectively nonplanar on $\R\setminus Z$, where $Z$ is a finite subset of $\R$. Let $\Phi:\R\setminus Z\to G$ be a continuous map such that $J_0\Phi(s)=\phi(s)$ for all $s\in\R\setminus Z$. Let $x\in\SL(n+1,\Q)/\Lambda\subset L/\Lambda$. Then $E_x\supset \Q\setminus Z$. 
\end{proposition}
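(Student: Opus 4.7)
The plan is to exhibit, for each rational $s\in\Omega\cap\Q^d$, a proper $\Q$-algebraic subgroup $H\in\cH_x$ containing $H_s$, namely the right-action stabilizer
\[H=\{h\in G:\phi(s)\,h=\phi(s)\}\]
of the rational vector $\phi(s)\in\Q^{n+1}$. If $\phi(s)\in\{0\}\times\R^n$, then $s$ lies in $\phi\inv(\{0\}\times\R^n)\subset E_x$ directly from the decomposition of $E_x$ used in the proof of Theorem~\ref{thm:main-manifold}, so we may assume $\phi(s)\notin\{0\}\times\R^n$, in which case $\Phi(s)\in P^-U$ by \eqref{eq:P-U}.

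The key observation is that the unipotent polynomial trajectories constructed throughout the paper always stabilize $\phi(s)$ on the right. The first equation of \eqref{eq:Bs} reads $\phi(s)B_s=0$, and the analogous identity used in the proof of Theorem~\ref{thm:d-shrinking} gives $\zeta_{g\gamma}(0)B{}(g)=0$ for every $g\in\SO(d)\setminus Z_s$; but $\zeta_{g\gamma}(0)=\phi(\Psi(0))=\phi(s)$ holds uniformly in $g$, so $\phi(s)B_s^k=\phi(s)B{}(g)^k=0$ for every $k\geq 1$. Consequently $\phi(s)P_s(t)=\phi(s)=\phi(s)(I-tB{}(g))\inv$ for all $t\in\R$, and exponentiating, the whole abelian unipotent group $\exp(\Span_\R\{B{}(g)^k:1\leq k\leq n\})$ (together with its $d=1$ analogue $F_s$) is contained in $H$.

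Since $\phi(s)\in\Q^{n+1}\setminus\{0\}$, the group $H$ is a proper $\Q$-algebraic subgroup of $G$ conjugate over $\Q$ to $\{h\in G:\te_0 h=\te_0\}\cong\SL(n,\R)\ltimes\R^n$, of codimension $n+1$. Writing $x=\gamma\Lambda$ with $\gamma\in\SL(n+1,\Q)$, the conjugate $\gamma\inv H\gamma$ is $\Q$-algebraic and contains the arithmetic lattice obtained from $\SL(n,\Z)\ltimes\Z^n$, so $Hx$ is closed in $G/\Lambda$ and carries an $H$-invariant probability measure; moreover this measure is ergodic for a generic $\Ad_L$-unipotent one-parameter subgroup lying in the $\R^n$-factor, putting $H\in\cH_x$. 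In the $d=1$ case, $P_s(\R)\subset H$ together with the minimality defining $H_s$ in Theorem~\ref{thm:main-local} forces $H_s\subset H$. In the $d\geq 2$ case, the inclusion $\exp(\lief{}(g)\cap\Lie G)\subset H$ yields $H(g)\subset H$ for every $g\in\SO(d)\setminus Z_s$ by the minimality of $H(g)$ in the proof of Theorem~\ref{thm:d-shrinking}, and Claim~1 there then gives $H_s=H(g_0)\subset H$ for a suitable $g_0$. Either way $H_s\subset H\subsetneq G$, so $G\not\subset H_s$ and $s\in E_H\subset E_x$ by \eqref{eq:def-E_H}. The one nontrivial step is verifying $H\in\cH_x$, which is routine once the arithmetic lattice in the affine group $\SL(n,\Z)\ltimes\Z^n$ is identified; the rest of the argument is driven entirely by the stabilizer identity $\phi(s)B{}(g)=0$.
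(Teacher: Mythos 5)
Your argument is correct but takes a genuinely different route from the paper. The paper's proof fixes a rational $g\in\SO(d)(\Q)$ (Zariski dense in $\SO(d)$, hence meeting $\SO(d)\setminus Z_s$), notes that $\lief(g)=M(g)\inv\lief M(g)$ is then a $\Q$-defined abelian nilpotent subalgebra, so that $\exp(\lief(g)\cap\lieg)$ is a $\Q$-unipotent group whose orbit through the rational point $x$ is already closed, and concludes that $H(g)$ — and hence $H_s$ — is an abelian unipotent $\Q$-group, in particular proper. You instead observe that the very first equation of \eqref{eq:Bs}, together with $\zeta_{g\gamma}(0)=\phi(\Psi(0))=\phi(s)$, forces $\phi(s)B(g)^k=0$ for all $k\geq 1$ and \emph{every} $g\in\SO(d)\setminus Z_s$, so each $P_s(\R)$ (resp.\ $\exp(\lief(g)\cap\lieg)$) lies in the right-stabilizer $H=\{h\in G:\phi(s)h=\phi(s)\}$. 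Since $\phi(s)\in\Q^{n+1}\setminus\{0\}$, this $H$ is a proper $\Q$-algebraic subgroup with no nontrivial $\Q$-characters, so $Hx$ is closed with finite invariant measure by Borel--Harish-Chandra, and the minimality defining $H_s$ (resp.\ $H(g)$, then Claim~1) forces $H_s\subset H\subsetneq G$. This buys you something: you avoid the rational-$g$ bookkeeping entirely and exhibit a single, explicit, proper closed-orbit subgroup uniformly containing all the $H(g)$. Two small inaccuracies that do not affect the conclusion: (i) the $H$-invariant probability measure on $Hx$ is \emph{not} ergodic for a one-parameter subgroup of the $\R^n$-radical (the projection to $\SL(n,\R)/\SL(n,\Z)$ is invariant for $n\geq 2$); ergodicity does hold for a unipotent one-parameter subgroup of the $\SL(n,\R)$-factor via Mautner/Howe--Moore, but in fact you never need $H\in\cH_x$ — closedness of $Hx$ already suffices for the orbit-closure minimality $H_s\subset H$, after which $G\not\subset H_s$ gives $s\in E_x$ directly from \eqref{eq:def:Ex} without invoking $E_H$; (ii) the opening special case $\phi(s)\in\{0\}\times\R^n$ is unnecessary (the stabilizer argument covers it since $\phi(s)\neq 0$ by nondegeneracy), and the cited decomposition of $E_x$ in the proof of Theorem~\ref{thm:main-manifold} is only established as a containment in the direction you would need to reverse.
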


\begin{proof}
For any $s\in \R$, let $D(s)\in M(n+1,\R)$ be the matrix whose $i$-th row is $\phi^{(i-1)}(s)\in\R^{n+1}$ for $1\leq i\leq n+1$. Let $Z$ be the set of roots of polynomial $s\mapsto \det(D(s))$. Then $\phi$ is non-degenerate at all $s\in \R\setminus Z$. If $Z=\R$, then $\phi(\R)$ is contained in a proper subspace of $\R$; see \cite[Corollary~3.3]{Shah-Yang:2022}. Therefore, $Z$ is finite.

Let $s\in \Q\setminus Z$. Since $\phi^{(k)}(s)/k!\in \Q^{n+1}$ for $0\leq k\leq n$, by \eqref{eq:Bs} we have that the nilpotent matrix $B_s\in M({n+1},\Q)$. So the $\R$-span of $\{B_s^k:1\leq k\leq n\}$ is an abelian Lie subalgebra of $\Lie(G)$ consisting of nilpotent matrices, and it is defined over $\Q$. So its associated Lie group, say $H$, is an abelian unipotent subgroup of $G$ defined over $\Q$. Since $x\in\SL(n+1,\Q)/\Lambda$, $H$ intersects the stabilizer of $x$ in a co-compact lattice. Hence $H\phi(s)x$ is compact. Therefore by Remark~\ref{rem:Hs-Bs}, we have that $H(s)=H$. Since $H\not\supset G$, $s\in E_x$.
\end{proof}

\subsection*{Acknowledgements} We would like to thank Manfred Einsiedler for insightful discussions on this topic. We are thankful to Dmitry Kleinbock for drawing our attention to the article of Pyartli. We thank the referees of this article for very careful reading and valuable suggestions for improvements. We again want to thank Dmitry Kleinbock for his input, which significantly improved the clarity and readability of this article.

\end{document}